%%%%%%%%%%%%%%%%%%%%%%%%%%%%%%%%%%%%%%%%%%%%%%%%%%%%%
%%%%%%%%%%%%%%%%%%%% May 26, 2014 %%%%%%%%%%%%
%%%%%%%%%%%%%%%%%%%%%%%%%%%%%%%%%%

\documentclass{amsart}

\usepackage{amsmath}
\usepackage{amsfonts}
\usepackage{amssymb,enumerate}
\usepackage{amsthm}
\usepackage[all]{xy}
\usepackage{hyperref}

\newcommand{\Ann}{\operatorname{Ann}}
\newcommand{\Jac}{\operatorname{Jac}}
\newcommand{\Hom}{\operatorname{Hom}}

\newcommand{\Spec}{\operatorname{Spec}}

\newcommand{\depth}{\operatorname{depth}}

\renewcommand{\dim}{\operatorname{dim}}

\newcommand{\Max}{\operatorname{Max}}

\newcommand{\fn}{\frak{n}}
\newcommand{\fm}{\frak{m}}
\newcommand{\fp}{\frak{p}}

\newcommand{\fq}{\frak{q}}
\newtheorem{thm}{Theorem}[section]
\newtheorem{cor}[thm]{Corollary}
\newtheorem{lem}[thm]{Lemma}
\newtheorem{prop}[thm]{Proposition}

\newtheorem{exam}[thm]{Example}
\newtheorem{rem}[thm]{Remark}

\begin{document}

\bibliographystyle{amsplain}

\date{}

\author{P. Sahandi, N. Shirmohammadi and S. Sohrabi}

\address{Department of Mathematics, University of Tabriz,
Tabriz, Iran}
\email{sahandi@tabrizu.ac.ir}

\address{Department of Mathematics, University of Tabriz,
Tabriz, Iran} \email{shirmohammadi@tabrizu.ac.ir}

\address{Department of Mathematics, University of Tabriz,
Tabriz, Iran} \email{sohrabil6@yahoo.com}

\keywords{Amalgamated algebra, Cohen-Macaulay ring, Gorenstein ring,
quasi-Gorenstein ring, Serre condition, universally catenary ring}

\subjclass[2010]{Primary 13A15, 13H10, 13C15}

%\thanks{Parviz Sahandi was in part supported by a grant from IPM (No.
%91130030)}

\title[Amalgamated algebra]{Cohen-Macaulay and Gorenstein properties under the amalgamated construction}

\begin{abstract}
Let $A$ and $B$ be commutative rings with unity, $f:A\to B$ a ring
homomorphism and $J$ an ideal of $B$. Then the subring
$A\bowtie^fJ:=\{(a,f(a)+j)|a\in A$ and $j\in J\}$ of $A\times B$ is
called the amalgamation of $A$ with $B$ along with $J$ with respect
to $f$. In this paper, among other things, we investigate the
Cohen-Macaulay and (quasi-)Gorenstein properties on the ring
$A\bowtie^fJ$.
\end{abstract}

\maketitle

\section{Introduction}

In \cite{DFF} and \cite{DFF2}, D'Anna, Finocchiaro, and Fontana have
introduced the following new ring construction. Let $A$ and $B$ be
commutative rings with unity, let $J$ be an ideal of $B$ and let
$f:A\to B$ be a ring homomorphism. They introduced the following
subring
$$A\bowtie^fJ:=\{(a,f(a)+j)|a\in A\text{ and }j\in J\}$$ of $A\times B$,
called the \emph{amalgamation of $A$ with $B$ along $J$ with respect
to $f$}. This construction generalizes the amalgamated duplication
of a ring along an ideal (introduced and studied in \cite{D},
\cite{DF}). Moreover, several classical constructions such as the
Nagata's idealization (cf. \cite[page 2]{Na}, \cite[Chapter VI,
Section 25]{Hu}), the $A + XB[X]$ and the $A+XB[[X]]$ constructions
can be studied as particular cases of this new construction (see
\cite[Examples 2.5 and 2.6]{DFF}).

Let $M$ be an $A$-module. In 1955, Nagata introduced a ring
extension of $A$ called the \emph{trivial extension} of $A$ by $M$
(or the \emph{idealization} of $M$ in $A$), denoted here by
$A\ltimes M$. Now, assume that $A$ is Noetherian local and that $M$
is finitely generated. It is well known that the trivial extension
$A\ltimes M$ is Cohen-Macaulay if and only if $A$ is Cohen-Macaulay
and the module $M$ is maximal Cohen-Macaulay. Furthermore, it is
proved by Reiten \cite{Re} and Foxby \cite{Fo} that $A\ltimes M$ is
Gorenstein if and only if $A$ is Cohen-Macaulay and $M$ is a
canonical module of $A$. Next, in \cite{A1}, Aoyama obtained a
generalization of this result to quasi-Gorenstein rings. Indeed, he
showed that $A\ltimes M$ is a quasi-Gorenstein ring if and only if
the completion $\widehat{A}$ satisfies Serre's condition $(S_2)$ and
$M$ is a canonical module of $A$.

Let $A$ be a Noetherian local ring and $I$ be an ideal of $A$.
Consider the amalgamated duplication $A\bowtie I:=\{(a,a+i)|a\in
A\text{ and }i\in I\}$ as in \cite{D}, \cite{DF}. D'Anna in \cite{D} (see also \cite{Sh})
proved that if $A$ is a Cohen-Macaulay local ring and $\Ann_A(I)=0$,
then the amalgamated duplication $A\bowtie I$ is Gorenstein if and
only if $A$ has a canonical ideal $I$. Next, in \cite{BSTY}, the
authors generalized this result as follows: if $\Ann_A(I)=0$, then
$A\bowtie I$ is a quasi-Gorenstein ring if and only if the
$\widehat{A}$ satisfies Serre's condition $(S_2)$ and $I$ is a
canonical ideal of $A$. Finally, in \cite{SSh}, the authors determined when the amalgamated duplication $A\bowtie I$ is normal and satisfies Serre's conditions $(S_n)$ and $(R_n)$.

In \cite[Remark 5.1]{DFF1}, assuming $A$ is a Cohen-Macaulay local
ring, $J$ is finitely generated as an $A$-module, and $J$ is contained in the  Jacobson radical of $B$, it is observed
that $A\bowtie^f J$ is a Cohen-Macaulay ring if and only if it is a
Cohen-Macaulay $A$-module if and only if $J$ is a maximal
Cohen-Macaulay module. Moreover, it is shown in \cite[Remark
5.4]{DFF1} that if $A$ is a Cohen-Macaulay local ring, having a
canonical module isomorphic (as an $A$-module) to $J$, then
$A\bowtie^f J$ is Gorenstein. Also, it is observed, under the
assumption $\Ann_{f(A)+J} (J) = 0$, that if $A$ is a Cohen-Macaulay
local ring and $A\bowtie^f J$ is Gorenstein, then $A$ has a
canonical module isomorphic to $f^{-1}(J)$ \cite[Proposition
5.5]{DFF1}.

The above results lead us to investigate further when the
amalgamated algebra $A\bowtie^fJ$ is Cohen-Macaulay or
(quasi-)Gorenstein.

More precisely, in Section 2, among other things, we prove that, for
a Noetherian local ring $(A,\fm)$, let $f:A\to B$ be a ring
homomorphism, and $J$ be an ideal of $B$, contained in the Jacobson
radical $B$, such that $f^{-1}(\fq)\neq\fm$, for each
$\fq\in\Spec(B)\backslash V(J)$ and that $\depth J<\infty$. Then
$A\bowtie^f J$ is Cohen-Macaulay if and only if $A$ is
Cohen-Macaulay and $J$ is a big Cohen-Macaulay module. In particular
if $J$ is finitely generated $A$-module (with the structure
naturally induced by $f$), then $A\bowtie^f J$ is Cohen-Macaulay if
and only if $A$ is Cohen-Macaulay and $J$ is maximal Cohen-Macaulay.
This improves \cite[Remark 5.1]{DFF1}, and \cite[Discussion 10]{D}.

In Section 3, among other things, we show that, for a local ring
$(A,\fm)$, $f:A\to B$ be a ring homomorphism, and $J$ be an ideal of
$B$ contained in the Jacobson radical $B$, such that $J$ is a
finitely generated $A$-module, if $\widehat{A}$ satisfies Serre's
condition $(S_2)$ and $J$ is a canonical module of $A$, then
$A\bowtie^fJ$ is quasi-Gorenstein.  Further, assume that $J^2=0$ and
that $\Ann_A(J)=0$. If $A\bowtie^fJ$ is quasi-Gorenstein, then $A$
satisfies $(S_2)$ and $J$ is a canonical module of $A$. This
generalizes \cite[Remark 5.4 and Proposition 5.5]{DFF1},
\cite[Theorem 11]{D} and \cite[Theorem 3.3]{BSTY}.

In Section 4, we describe the amalgamated algebra as a quotient of a
polynomial ring. As a consequence, we derive a characterization of
universally catenary property of the amalgamated algebra.

Now, in the following proposition, we collect some of the main
properties of the amalgamated algebra $A\bowtie^fJ$ needed in the
present paper. We use it in this paper without comments.

\begin{prop}\label{P} Let $A, B$ be commutative rings with
unity, $f:A\to B$ be a ring homomorphism, and $J$ be an ideal of
$B$.
\begin{itemize}
\item [(1)] (\cite[Proposition 5.1(1)]{DFF})
Let $\iota_A:A\to A\bowtie^fJ$ be the natural ring
homomorphism defined by $\iota_A(a)=(a,f(a))$,
for all $a\in A$. Then $\iota_A$ is an embedding,
making $A\bowtie^fJ$ a ring extension of $A$.
\item [(2)] (\cite[Proposition 5.7(a)]{DFF})
If $J$ is a finitely generated $A$-module
(with the structure naturally induced by $f$),
then $A\bowtie^fJ$ is Noetherian if and only
if $A$ is Noetherian.
\item [(3)] (\cite[Lemma 2.3(4)]{DFF}) Let
$\iota_J:J\to A\bowtie^fJ$, defined by $j\mapsto(0,j)$
and $P_A:A\bowtie^fJ\to A$, be the natural projection
of $A\bowtie^fJ\subseteq A\times B$ into $A$.
Then the following is a split exact sequence of $A$-modules:
    $$0\longrightarrow J\stackrel{\iota_J}{\longrightarrow}A\bowtie^fJ\stackrel{P_A}{\longrightarrow}A\longrightarrow0.$$
\item [(4)] (\cite[Corollaries 2.5 and 2.7]{DFF1})
For $\fp\in\Spec(A)$ and $\fq\in\Spec(B)\backslash V(J)$, set
\begin{align*}
\fp^{\prime_f}:= & \fp\bowtie^fJ:=\{(p,f(p)+j)|p\in \fp, j\in J\}, \\[1ex]
\overline{\fq}^f:= & \{(a,f(a)+j)|a\in A, j\in J, f(a)+j\in \fq\}.
\end{align*}
Then, the following statements hold.
\begin{itemize}
\item [(i)] The prime ideals of $A\bowtie^fJ$
are of the type $\overline{\fq}^f$ and $\fp^{\prime_f}$, for $\fq$
varying in $\Spec(B)\backslash V(J)$ and $\fp$ in $\Spec(A)$.
\item [(ii)] $\Max(A\bowtie^fJ)=\{\fp^{\prime_f}|\fp\in\Max(A)\}\cup\{\overline{\fq}^f|\fq\in\Max(B)\backslash V(J)\}.$
\item [(iii)] $A\bowtie^fJ$ is a local ring if
and only if $A$ is local and $J\subseteq\Jac(B)$.
\end{itemize}
\item [(5)] (\cite[Proposition 2.9]{DFF1})
The following statements hold.
\begin{itemize}
\item [(i)] For any prime ideal $\fq\in\Spec(B)\backslash V(J)$,
the localization $(A\bowtie^fJ)_{\overline{\fq}^f}$ is canonically
isomorphic to $B_{\fq}$.
\item [(ii)] For any prime ideal
$\fp\in\Spec(A)\backslash V(f^{-1}(J))$, the localization
$(A\bowtie^fJ)_{\fp^{\prime_f}}$ is canonically isomorphic to
$A_{\fp}$.
\item [(iii)] Let $\fp$ be a prime ideal of $A$
containing $f^{-1}(J)$. Consider the multiplicative subset
$S_{\fp}:=f(A\backslash \fp)+J$ of $B$ and set $B_{S_{\fp}}:=S_{\fp}^{-1}B$ and
$J_{S_{\fp}}:=S_{\fp}^{-1}J$. If $f_{\fp}:A_{\fp}\to B_{S_{\fp}}$ is the ring homomorphism
induced by $f$, then the ring $(A\bowtie^fJ)_{\fp^{\prime_f}}$ is
canonically isomorphic to $A_{\fp}\bowtie^{f_{\fp}}J_{S_{\fp}}$.
\end{itemize}
\end{itemize}
\end{prop}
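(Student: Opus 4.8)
The plan is to derive all five parts directly from the description of $A\bowtie^fJ$ as a subring of $A\times B$, handling (1)--(3) by elementary verification and concentrating the real work on the prime-spectrum statement (4) and the localization formulas (5).

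First I would dispose of (1) and (3) together. For (1), the image $\iota_A(a)=(a,f(a))$ lies in $A\bowtie^fJ$ because $0\in J$, and $\iota_A$ respects sums and products since $f$ does; injectivity is immediate from reading off the first coordinate. For (3), the element $(0,j)$ lies in $A\bowtie^fJ$ (take $a=0$), so $\iota_J$ is a well-defined $A$-linear map when $J$ carries the $A$-action induced by $f$; the projection $P_A$ is a surjective $A$-homomorphism whose kernel is exactly $\{0\}\times J=\img\iota_J$, and $P_A\circ\iota_A=\mathrm{id}_A$ exhibits the splitting. This yields the $A$-module decomposition $A\bowtie^fJ\cong A\oplus J$, from which (2) follows: if $A$ is Noetherian and $J$ is a finitely generated $A$-module then $A\bowtie^fJ$ is module-finite over $A$, hence a Noetherian ring, while conversely $A$ is a quotient of $A\bowtie^fJ$ via $P_A$ and so inherits Noetherianity.

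For (4), the key observation is that $\fa:=\{0\}\times J$ is an ideal of $A\bowtie^fJ$ (using that $J$ is an ideal of $B$) with $(A\bowtie^fJ)/\fa\cong A$ via $P_A$. Prime ideals containing $\fa$ therefore correspond bijectively to $\Spec(A)$, and one checks this correspondence produces precisely the ideals $\fp^{\prime_f}$. For a prime $P$ not containing $\fa$, I would pass to the second projection $\lambda\colon A\bowtie^fJ\to B$, $(a,f(a)+j)\mapsto f(a)+j$, whose image is the subring $f(A)+J$; since $P\not\supseteq\{0\}\times J$ one shows $P$ is determined by a prime $\fq$ of $B$ with $J\not\su\fq$, and tracing the definitions identifies $P$ with $\overline{\fq}^f$. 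Statement (ii) is then read off by testing which of the two families are maximal.

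The main obstacle is (5), and within it part (iii). For (i), given $\fq\in\Spec(B)\setminus V(J)$ one chooses $j\in J\setminus\fq$; the element $(0,j)\notin\overline{\fq}^f$ becomes invertible after localization, and inverting it collapses the $A$-coordinate so that the localized ring is canonically $B_{\fq}$. For (ii), if $\fp\not\supseteq f^{-1}(J)$ one instead inverts an element of $A$ lying outside $\fp$ whose image falls in $J$, which annihilates the $J$-summand and leaves $A_{\fp}$. Part (iii) is the delicate case: with $\fp\supseteq f^{-1}(J)$, no such collapse occurs, and I would verify that localizing $A\bowtie^fJ$ at $\fp^{\prime_f}$ is compatible with forming the multiplicative set $S_{\fp}=f(A\setminus\fp)+J$ in $B$, so that the universal property of localization produces the canonical isomorphism $(A\bowtie^fJ)_{\fp^{\prime_f}}\cong A_{\fp}\bowtie^{f_{\fp}}J_{S_{\fp}}$. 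The care needed here lies in checking that $S_{\fp}$ is the correct image of $(A\bowtie^fJ)\setminus\fp^{\prime_f}$ under $\lambda$ and that the two localizations agree coordinatewise; this bookkeeping, rather than any single hard idea, is the crux.
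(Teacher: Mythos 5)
The paper never proves this proposition: it is a compendium of results quoted from the literature (parts (1)--(3) from \cite{DFF}, parts (4)--(5) from \cite{DFF1}), which the authors announce they will use ``without comments.'' So the only meaningful comparison is between your argument and the proofs in those cited sources, and your architecture is essentially the standard one used there: the split exact sequence $0\to 0\times J\to A\bowtie^fJ\to A\to 0$ for (1)--(3), the ideal $\fa=\{0\}\times J$ with $(A\bowtie^fJ)/\fa\cong A$ together with the second projection $\lambda:(a,f(a)+j)\mapsto f(a)+j$ for (4), and explicit localization maps for (5). (Incidentally, the paper does re-derive part (2) later, by a different route than yours: Proposition \ref{s} exhibits $A\bowtie^fJ$ as a quotient of a polynomial ring over $A$, so Noetherianity follows from the Hilbert basis theorem rather than from module-finiteness; both arguments are fine.)

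Two places in your sketch compress away the steps that carry the real content. First, in (4): for a prime $P\not\supseteq\fa$ you correctly reduce via $\lambda$ to a prime $\fq_0$ of the subring $f(A)+J$ with $J\not\subseteq\fq_0$ (note that $\ker\lambda=f^{-1}(J)\times\{0\}$ is forced into $P$ because $(\{0\}\times J)\cdot(f^{-1}(J)\times\{0\})=0$), but $\overline{\fq}^f$ is indexed by primes of $B$, not of $f(A)+J$, so you must still lift $\fq_0$ uniquely to some $\fq\in\Spec(B)\setminus V(J)$. This requires the ``common ideal'' fact: for $j\in J\setminus\fq_0$ one has $(f(A)+J)[1/j]=B[1/j]$, since any $b\in B$ equals $(bj)/j$ with $bj\in J\subseteq f(A)+J$; hence primes of $f(A)+J$ avoiding $J$ correspond bijectively to primes of $B$ avoiding $J$. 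Your phrase ``one shows $P$ is determined by a prime $\fq$ of $B$'' is exactly this step and must be made explicit; the same identification is what makes the maximality statement (ii) work (for $\fq\in\Max(B)\setminus V(J)$ the induced injection $(f(A)+J)/(\fq\cap(f(A)+J))\to B/\fq$ is surjective because the image of $J$ is a nonzero ideal of the field $B/\fq$), and statement (iii), which your outline omits, then follows from (ii). Second, in (5)(ii) the element you invert must be chosen correctly: for $a_0\in f^{-1}(J)\setminus\fp$, the element $\iota_A(a_0)=(a_0,f(a_0))$ does \emph{not} annihilate $\{0\}\times J$, since $(a_0,f(a_0))\cdot(0,j)=(0,f(a_0)j)$; what kills the $J$-part is $(a_0,0)$, which lies in $A\bowtie^fJ$ precisely because $-f(a_0)\in J$, and lies outside $\fp^{\prime_f}$ because $a_0\notin\fp$. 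With these two repairs your outline is complete and agrees with the proofs in \cite{DFF} and \cite{DFF1}.
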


\section{Cohen-Macaulay property under amalgamated construction}

Let us fix some notation which we shall use frequently throughout
this section: $A, B$ are two commutative rings with unity, $A$ is
Noetherian, $f:A\to B$ is a ring homomorphism, and $J$ denotes an
ideal of $B$, such that $A\bowtie^f J$ is Noetherian.

In \cite[Remark 5.1]{DFF1}, assuming $A$ is a Cohen-Macaulay local
ring, $J$ is finitely generated as an $A$-module, and $J$ is contained in the  Jacobson radical of $B$, it is observed
that $A\bowtie^f J$ is a Cohen-Macaulay ring if and only if it is a
Cohen-Macaulay $A$-module if and only if $J$ is a maximal
Cohen-Macaulay module. Also, in \cite[Remark 5.2]{DFF1}, the authors
mentioned that, if $J$ is not finitely generated as $A$-module, it
is more problematic to find conditions implying $A\bowtie^f J$ is
Cohen-Macaulay. Our main result in this section improves their
observation as well as, in case that $J$ is not finitely generated, it provides conditions implying $A\bowtie^f J$ is Cohen-Macaulay.

To state the main result we need to introduce some terminology. Let
$(A,\fm)$ be a Noetherian local ring, and $N$ an $A$-module (not
necessarily finitely generated). The \emph{depth} of $N$ over $A$ is
defined by the non-vanishing of the local cohomology modules
$H^i_{\fm}(N)$, with respect to $\fm$, in the way that
$$\depth N:=\depth_AN:=\inf\{i|H^i_{\fm}(N)\neq0\},$$
see \cite[Section 9.1]{BH} and \cite{FI}.
When $N$ is finitely generated, $\depth N$ coincides with
the usual depth defined by the common length of the maximal
$N$-regular sequences in $\fm$ by \cite[Proposition 3.5.4(b)]{BH}.
By \cite[Exercise 9.1.12]{BH}, if $\fm N\neq N$, then $\depth N$ is
finite, and when $\depth N$ is finite, then $\depth N\leq\dim A$.

It is well known that, over a local ring $(A,\fm)$, a finitely
generated $A$-module $M$ is Cohen-Macaulay if and only if
$H^i_{\fm}(M)=0$ for all $i<\dim M$ (see \cite[Corollary
6.2.8]{BS}).

Recall that a finitely generated module $M$ over a Noetherian local
ring $(A,\fm)$ is called a \emph{maximal Cohen-Macaulay $A$-module} if
$\depth M=\dim A$. An $A$-module $N$ is said to be \emph{big
Cohen-Macaulay} if $\depth N = \dim A$. Such modules were
constructed by Hochster \cite{Ho}, when $A$ contains a field. The
reader is referred to \cite[Chapter 8]{BH} for details on the
existence and properties of big Cohen-Macaulay modules.

The following auxiliary lemma which is interesting in itself is of
fundamental importance in our study of Cohen-Macaulayness.

In the sequel, $\Jac(B)$ will denote the Jacobson radical of $B$.

\begin{lem}\label{dim}
Let $(A,\fm)$ be a local ring, and $J\subseteq\Jac(B)$ be an ideal
of $B$ such that $f^{-1}(\fq)\neq\fm$, for each
$\fq\in\Spec(B)\backslash V(J)$. Then $\dim A\bowtie^f J=\dim A$,
and $\depth A\bowtie^f J=\min\{\depth A,\depth J\}$.
\end{lem}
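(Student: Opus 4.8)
The plan is to deduce both equalities from a single observation: under the stated hypothesis, the extension of the maximal ideal $\fm$ to $A\bowtie^f J$ has radical equal to the maximal ideal $\fm^{\prime_f}:=\fm\bowtie^f J$ of $A\bowtie^f J$ (by Proposition~\ref{P}(4)(iii) the ring $A\bowtie^f J$ is local with this maximal ideal, since $J\subseteq\Jac(B)$). To verify this, I would observe that a prime $P$ of $A\bowtie^f J$ contains $\fm(A\bowtie^f J)=\iota_A(\fm)(A\bowtie^f J)$ if and only if its contraction $\iota_A^{-1}(P)$ equals $\fm$ (a proper prime of the local ring $A$ containing $\fm$ must be $\fm$). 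Using the list of primes in Proposition~\ref{P}(4)(i), a direct check of $\iota_A(a)=(a,f(a))$ against the definitions gives $\iota_A^{-1}(\fp^{\prime_f})=\fp$ and $\iota_A^{-1}(\overline{\fq}^f)=f^{-1}(\fq)$. Hence among the primes $\fp^{\prime_f}$ only $\fp=\fm$ qualifies, while the hypothesis $f^{-1}(\fq)\neq\fm$ for all $\fq\in\Spec(B)\setminus V(J)$ excludes every prime of the form $\overline{\fq}^f$. Thus $\fm^{\prime_f}$ is the unique prime over $\fm(A\bowtie^f J)$, i.e. $\sqrt{\fm(A\bowtie^f J)}=\fm^{\prime_f}$.

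For the dimension I would combine this with the standing assumption that $A\bowtie^f J$ is Noetherian. Put $d=\dim A$ and choose a system of parameters $x_1,\dots,x_d$ of $A$, so that $\sqrt{(x_1,\dots,x_d)A}=\fm$. Extending along $\iota_A$ preserves radicals of the maximal ideal, so $\sqrt{(\iota_A(x_1),\dots,\iota_A(x_d))(A\bowtie^f J)}=\sqrt{\fm(A\bowtie^f J)}=\fm^{\prime_f}$. Therefore $\fm^{\prime_f}$ is the radical of an ideal generated by $d$ elements, and Krull's height theorem yields $\dim A\bowtie^f J=\h(\fm^{\prime_f})\leq d=\dim A$. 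The reverse inequality is immediate from Proposition~\ref{P}(3): the projection $P_A$ exhibits $A$ as a homomorphic image $(A\bowtie^f J)/\Ker(P_A)$ of $A\bowtie^f J$, and the dimension of a quotient never exceeds that of the ring, so $\dim A\leq\dim A\bowtie^f J$. The two inequalities give $\dim A\bowtie^f J=\dim A$.

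For the depth I would pass to local cohomology. By the Independence Theorem for local cohomology applied to $\iota_A:A\to A\bowtie^f J$, every $A\bowtie^f J$-module $M$ satisfies $H^i_{\fm}(M)\cong H^i_{\fm(A\bowtie^f J)}(M)$; since local cohomology depends only on the radical of the defining ideal, the first observation turns this into $H^i_{\fm}(M)\cong H^i_{\fm^{\prime_f}}(M)$ for all $i$. Taking $M=A\bowtie^f J$ shows that its depth as a module over itself equals $\depth_A(A\bowtie^f J)$. The splitting in Proposition~\ref{P}(3) provides an isomorphism $A\bowtie^f J\cong A\oplus J$ of $A$-modules, and local cohomology commutes with finite direct sums, so $H^i_{\fm}(A\bowtie^f J)\cong H^i_{\fm}(A)\oplus H^i_{\fm}(J)$ for every $i$. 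Reading off the least index $i$ for which this is nonzero gives $\depth A\bowtie^f J=\min\{\depth A,\depth J\}$.

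The conceptual core—and the one place the hypothesis $f^{-1}(\fq)\neq\fm$ enters—is the identity $\sqrt{\fm(A\bowtie^f J)}=\fm^{\prime_f}$; once it is established, both assertions follow formally. I expect the only delicate point to be the use of the Independence Theorem: one must confirm that the $A\bowtie^f J$-module structure on the local cohomology computed over $A$ coincides with the structure computed over $A\bowtie^f J$, so that the two depths, defined via vanishing of local cohomology at the respective maximal ideals, genuinely agree.
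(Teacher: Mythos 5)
Your proposal is correct, and its skeleton matches the paper's proof in one half while genuinely diverging in the other. The shared part is the depth computation: like the paper, you establish $\sqrt{\fm(A\bowtie^f J)}=\fm^{\prime_f}$, invoke the Independence Theorem together with radical-invariance of local cohomology, and use the split exact sequence of Proposition \ref{P}(3) to obtain $H^i_{\fm^{\prime_f}}(A\bowtie^f J)\cong H^i_{\fm}(A)\oplus H^i_{\fm}(J)$, from which $\depth A\bowtie^f J=\min\{\depth A,\depth J\}$ is read off; your worry about module structures is harmless, since depth is defined by vanishing alone. You differ from the paper in two places. First, the paper simply cites \cite[Corollary 3.2]{DFF1} for the radical identity, whereas you re-derive it from the classification of primes in Proposition \ref{P}(4) via the contractions $\iota_A^{-1}(\fp^{\prime_f})=\fp$ and $\iota_A^{-1}(\overline{\fq}^f)=f^{-1}(\fq)$ — this is exactly where the hypothesis $f^{-1}(\fq)\neq\fm$ enters, and your computation is correct, making the lemma essentially self-contained. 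Second, for the dimension equality the paper reuses the same local cohomology isomorphism and concludes via Grothendieck's Vanishing (and Non-vanishing) Theorem, so that both equalities fall out of one display; you instead argue prime-theoretically: a system of parameters of $A$ extends to an ideal of $A\bowtie^f J$ whose radical is $\fm^{\prime_f}$, so Krull's height theorem (using the standing Noetherian hypothesis on $A\bowtie^f J$) gives $\dim A\bowtie^f J\leq\dim A$, while the surjection $P_A$ gives the reverse inequality. Your route is more elementary — it needs no vanishing theorems for the dimension statement — at the cost of a slightly longer argument; the paper's route is more uniform, extracting both invariants from the single isomorphism of local cohomology modules.
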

\begin{proof}
By \cite[Corollary 3.2]{DFF1}, we have
$\fm^{\prime_f}=\sqrt{\fm(A\bowtie^f J)}$. Using this, the
Independence Theorem of local cohomology \cite[Theorem 4.2.1]{BS}
yields the isomorphism $H^i_{\fm^{\prime_f}}(A\bowtie^f J)\cong
H^i_{\fm}(A\bowtie^f J)$ for each $i$. On the other hand, applying
the functor of local cohomology to the split exact sequence of
$A$-modules appeared in Proposition \ref{P}(3), one obtains the
isomorphism $H^i_{\fm}(A\oplus J) \cong H^i_{\fm}(A)\oplus
H^i_{\fm}(J)$ for each $i$. Thus, for each $i$, we have shown the
following isomorphism
$$H^i_{\fm^{\prime_f}}(A\bowtie^f J)\cong
H^i_{\fm}(A)\oplus H^i_{\fm}(J)$$ of $A$-modules. Using above
isomorphism together with Grothendieck's Vanishing Theorem
\cite[Theorem 6.1.2]{BS}, one concludes the first equality.\\
In order to prove the second equality, again, we use above
isomorphism. First, note that $H^i_{\fm}(A)=H^i_{\fm}(J)=0$ for all
$i<\depth A\bowtie^f J$. Hence $\depth A\bowtie^f J\leq\depth A$ and
$\depth A\bowtie^f J\leq\depth J$. Now set $n:=\depth A\bowtie^f J$
and consider
$$H^{n}_{\fm}(A)\oplus H^{n}_{\fm}(J)\cong H^{n}_{\fm^{\prime_f}}(A\bowtie^f J)\neq0$$
to obtain $H^{n}_{\fm}(A)\neq0$ or $H^{n}_{\fm}(J)\neq0$. In the
first case we deduce that $\depth A=\depth A\bowtie^f J\leq\depth
J$. Similarly, in the second case we have $\depth J=\depth
A\bowtie^f J\leq\depth A$. Therefore the second equality is
obtained.
\end{proof}

\begin{rem}
Let $A$ be a (not necessarily Noetherian) ring, and $J$ be an ideal
of $B$. It is observed, in
\cite[Propositions 4.1 and 4.2]{DFF}, that if $f$ is surjective, or $f$ is integral, then $\dim A\bowtie^f J=\dim A$.
\end{rem}

The next theorem is the main result of this section. With it, we not
only offer an application of the above lemma, but we also provide
more information about the Cohen-Macaulayness under the amalgamation than was given in \cite[Remark 5.1]{DFF1}.

\begin{thm}\label{cm}
With the assumptions of Lemma \ref{dim}, the following statements hold.
\begin{enumerate}
       \item If $A\bowtie^f J$ is Cohen-Macaulay, then so does $A$.
       \item Further assume that $\depth J<\infty$.
       Then $A\bowtie^f J$ is Cohen-Macaulay if and
only if $A$ is Cohen-Macaulay and $J$ is a big Cohen-Macaulay module.
\end{enumerate}
\end{thm}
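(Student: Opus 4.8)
The plan is to derive everything directly from the two identities established in Lemma \ref{dim}, namely $\dim A\bowtie^f J=\dim A$ and $\depth A\bowtie^f J=\min\{\depth A,\depth J\}$, together with the two basic facts recalled in the preliminaries: over a Noetherian local ring one always has $\depth A\leq\dim A$, and whenever $\depth J$ is finite one has $\depth J\leq\dim A$. No homological input beyond the lemma should be needed, since the content of the theorem is essentially a translation of the defining equality $\depth=\dim$ for Cohen-Macaulayness through these formulas. Recall also that $A\bowtie^f J$ is genuinely a Noetherian local ring under the standing hypotheses (Proposition \ref{P}(2) and (4)(iii)), so that speaking of its Cohen-Macaulay property is legitimate.

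For part (1), I would assume $A\bowtie^f J$ is Cohen-Macaulay, which by definition and Lemma \ref{dim} means
$$\min\{\depth A,\depth J\}=\depth A\bowtie^f J=\dim A\bowtie^f J=\dim A.$$
In particular $\depth A\geq\dim A$; combining this with the unconditional inequality $\depth A\leq\dim A$ forces $\depth A=\dim A$, so $A$ is Cohen-Macaulay. I would emphasize that this implication requires no finiteness hypothesis on $\depth J$.

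For the forward direction of part (2), I would first invoke part (1) to obtain that $A$ is Cohen-Macaulay, and then reuse the same identity $\min\{\depth A,\depth J\}=\dim A$ to read off $\depth J\geq\dim A$. At this point the standing assumption $\depth J<\infty$ together with the inequality $\depth J\leq\dim A$ gives $\depth J=\dim A$, i.e.\ $J$ is a big Cohen-Macaulay module. For the reverse direction, assuming $A$ Cohen-Macaulay and $J$ big Cohen-Macaulay yields $\depth A=\depth J=\dim A$, whence Lemma \ref{dim} gives $\depth A\bowtie^f J=\min\{\dim A,\dim A\}=\dim A=\dim A\bowtie^f J$, so $A\bowtie^f J$ is Cohen-Macaulay.

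I do not anticipate a genuine obstacle, since the essential work has already been carried out in Lemma \ref{dim}; the only point demanding a little care is the bookkeeping about which depth inequality is available unconditionally ($\depth A\leq\dim A$) versus which one requires the finiteness hypothesis ($\depth J\leq\dim A$, valid only when $\depth J<\infty$). Keeping this distinction straight is precisely what explains the asymmetry in the statement: part (1) and the reverse implication of part (2) go through without assuming $\depth J<\infty$, whereas the forward implication of part (2) genuinely needs it in order to convert $\depth J\geq\dim A$ into equality.
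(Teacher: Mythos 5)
Your proposal is correct and follows essentially the same route as the paper: both parts are read off from the identities $\dim A\bowtie^f J=\dim A$ and $\depth A\bowtie^f J=\min\{\depth A,\depth J\}$ of Lemma \ref{dim}, using $\depth A\leq\dim A$ unconditionally and $\depth J\leq\dim A$ under the finiteness hypothesis, exactly as the paper does (the paper's only cosmetic difference is that for the converse in part (2) it unwinds the local cohomology isomorphism from the lemma's proof, whereas you equivalently reuse the depth formula itself). Your closing remark about which implications need $\depth J<\infty$ matches the paper's use of \cite[Exercise 9.1.12]{BH}.
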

\begin{proof}
(1) Using Lemma \ref{dim}, we have $\dim A=\dim A\bowtie^f J=\depth
A\bowtie^f J\leq\depth A\leq\dim A$. Hence $A$ is Cohen-Macaulay.\\
(2) Assume that
$A\bowtie^fJ$ is Cohen-Macaulay. Then, by (1), $A$ is
Cohen-Macaulay. To prove the big Cohen-Macaulayness of $J$, note
that we have $\dim A=\dim A\bowtie^f J=\depth A\bowtie^f J\leq\depth
J\leq\dim A$, where the last inequality comes from \cite[Exercise
9.1.12]{BH}. Thus $\depth J=\dim A$, that is, $J$ is big
Cohen-Macaulay. Conversely, assume that $A$ is Cohen-Macaulay and
that $J$ is a big Cohen-Macaulay module. It follows that
$H^i_{\fm}(A)=H^i_{\fm}(J)=0$ for all $i<\dim A$. Then, by the above
isomorphism, one has $H^i_{\fm^{\prime_f}}(A\bowtie^fJ)=0$ for all
$i<\dim A=\dim A\bowtie^f J$, and hence $A\bowtie^fJ$ is
Cohen-Macaulay.
\end{proof}

The next example shows that, if, in the above theorem, the
hypothesis $f^{-1}(\fq)\neq\fm$, for each $\fq\in\Spec(B)\backslash
V(J)$, is dropped, then the corresponding statement is no longer
always true.

\begin{exam}
Let $k$ be a field and $X,Y$ are algebraically independent
indeterminates over $k$. Set $A:=k[[X]]$, $B:=k[[X,Y]]$ and let
$J:=(X,Y)$. Let $f: A\to B$ be the inclusion. Note that $A$ is
Cohen-Macaulay, and it is not hard to see that $J$ is a big
Cohen-Macaulay $A$-module. However, $A\bowtie^f J$ which is
isomorphic to $k[[X, Y,Z]]/(Y,Z)\cap(X-Y )$ is not Cohen-Macaulay.
It should be noted that, for $\fq:=(X)\in\Spec(B)\backslash V(J)$,
one has $f^{-1}(\fq)=(X)$ which is the maximal ideal of $A$.
\end{exam}

It is clear that a finitely generated big Cohen-Macaulay module is
maximal Cohen-Macaulay. Then we have the following corollary.

\begin{cor}\label{cm1}
Let $(A,\fm)$ be a local ring, and $J\subseteq\Jac(B)$ be an ideal
of $B$ such that $J$ is a finitely generated $A$-module. Then
$A\bowtie^f J$ is Cohen-Macaulay if and only if $A$ is
Cohen-Macaulay and $J$ is a maximal Cohen-Macaulay $A$-module.
\end{cor}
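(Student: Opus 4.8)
The plan is to deduce the corollary from Theorem \ref{cm}(2). To invoke that theorem I must check two things: first, that the standing hypothesis of Lemma \ref{dim}, namely $f^{-1}(\fq)\neq\fm$ for every $\fq\in\Spec(B)\backslash V(J)$, is automatically satisfied here; and second, that $\depth J<\infty$. The finiteness of $\depth J$ is the easy part: if $J=0$ then $A\bowtie^f J\cong A$ and the statement is trivial, while if $J\neq 0$ then, since $J$ is a finitely generated $A$-module, Nakayama's lemma gives $\fm J\neq J$, whence $\depth J<\infty$ by \cite[Exercise 9.1.12]{BH}.

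The real work is verifying the hypothesis of Lemma \ref{dim}, and the key observation is that finite generation of $J$ over $A$ forces $f$ to behave integrally. Because $J$ is an ideal of $B$, multiplication makes $J$ a $B$-module, and this action is $A$-linear for the structure induced by $f$; hence there is an $A$-algebra homomorphism $B\to\operatorname{End}_A(J)$ with kernel $\Ann_B(J)$. Since $A$ is Noetherian and $J$ is a finitely generated $A$-module, $\operatorname{End}_A(J)$ is a finitely generated $A$-module, and therefore so is its $A$-subalgebra $B/\Ann_B(J)$. Thus $A\to B/\Ann_B(J)$ is module-finite, in particular integral.

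Now suppose, for contradiction, that $f^{-1}(\fq)=\fm$ for some $\fq\in\Spec(B)\backslash V(J)$. As $J\not\subseteq\fq$ while $\Ann_B(J)\cdot J=0\subseteq\fq$, primeness of $\fq$ yields $\Ann_B(J)\subseteq\fq$, so $\fq$ corresponds to a prime of $B/\Ann_B(J)$ whose contraction to $A$ is the maximal ideal $\fm$. By integrality of $A\to B/\Ann_B(J)$, this prime is maximal in $B/\Ann_B(J)$; and since any prime of $B$ containing $\fq$ also contains $\Ann_B(J)$, it follows that $\fq$ is a maximal ideal of $B$. But then $J\subseteq\Jac(B)\subseteq\fq$, contradicting $\fq\notin V(J)$. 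Hence $f^{-1}(\fq)\neq\fm$ for all $\fq\in\Spec(B)\backslash V(J)$, so the hypotheses of Lemma \ref{dim} are met.

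With both conditions in hand, Theorem \ref{cm}(2) gives that $A\bowtie^f J$ is Cohen-Macaulay if and only if $A$ is Cohen-Macaulay and $J$ is a big Cohen-Macaulay module. Finally, because $J$ is finitely generated, being big Cohen-Macaulay is the same as being maximal Cohen-Macaulay (a finitely generated big Cohen-Macaulay module satisfies $\depth J=\dim A$, and conversely), which yields the stated equivalence. I expect the main obstacle to be the middle step: recognizing that finite generation of $J$ over $A$ makes $A\to B/\Ann_B(J)$ integral, so that a prime $\fq$ outside $V(J)$ (necessarily non-maximal, as $J\subseteq\Jac(B)$) cannot contract to the maximal ideal $\fm$.
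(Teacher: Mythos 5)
Your proof is correct, and its skeleton matches the paper's: verify the hypotheses of Lemma \ref{dim} and Theorem \ref{cm}(2), apply the theorem, and then identify big Cohen--Macaulay with maximal Cohen--Macaulay for finitely generated modules. The difference lies in how the key hypothesis $f^{-1}(\fq)\neq\fm$ for all $\fq\in\Spec(B)\backslash V(J)$ is obtained: the paper disposes of it in one line by citing \cite[Remark 3.3]{DFF1}, whereas you prove it from scratch. Your argument --- the multiplication action gives an $A$-algebra embedding of $B/\Ann_B(J)$ into $\operatorname{End}_A(J)$, which is a finitely generated $A$-module since $A$ is Noetherian and $J$ is finitely generated, so $A\to B/\Ann_B(J)$ is integral; a prime $\fq\notin V(J)$ necessarily contains $\Ann_B(J)$, so if it contracted to $\fm$ it would be maximal in $B/\Ann_B(J)$ and hence in $B$, forcing $J\subseteq\Jac(B)\subseteq\fq$, a contradiction --- is correct in every step, and it essentially reconstructs the content of the cited remark, making the corollary self-contained rather than taking a genuinely new route. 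You also gain a little rigor over the paper: you check $\depth J<\infty$ explicitly (Nakayama when $J\neq0$), a hypothesis Theorem \ref{cm}(2) formally requires but which the paper's proof never mentions, and you at least flag the degenerate case $J=0$, where the stated equivalence is sensitive to whether one's convention counts the zero module as maximal Cohen--Macaulay.
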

\begin{proof}
By \cite[Remark 3.3]{DFF1}, we see that if $J$ is finitely generated
$A$-module, then $f^{-1}(\fq)\neq\fm$, for each
$\fq\in\Spec(B)\backslash V(J)$. So the assertion holds by Theorem
\ref{cm}.
\end{proof}

Let $M$ be a finitely generated $A$-module. In 1955, Nagata
introduced a ring extension of $A$ called the \emph{trivial
extension} of $A$ by $M$ (or the \emph{idealization} of $M$ in $A$),
denoted here by $A\ltimes M$ (\cite[page 2]{Na}, \cite[Chapter VI,
Section 25]{Hu}). It should be noted that the module $M$ becomes an
ideal in $A\ltimes M$ and $M^2=0$. As in \cite[Example 2.8]{DFF}, if
$B:=A\ltimes M$, $J:=0\ltimes M$, and $f:A\to B$ be the natural embedding,
then $A\bowtie^f J\cong A\ltimes M$. Therefore the following result
follows from Corollary \ref{cm1}.

\begin{cor}\label{tc}
Let $(A,\fm)$ be a local ring, and $M$ be a finitely generated
$A$-module. Then the trivial extension $A\ltimes M$ is
Cohen-Macaulay if and only if $A$ is Cohen-Macaulay and $M$ is a
maximal Cohen-Macaulay $A$-module.
\end{cor}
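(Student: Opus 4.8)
The plan is to realize the trivial extension as a special case of the amalgamated construction and then invoke Corollary \ref{cm1}. Following the discussion preceding the statement, I would set $B:=A\ltimes M$, let $J:=0\ltimes M$, and take $f:A\to B$ to be the natural embedding $a\mapsto(a,0)$; by \cite[Example 2.8]{DFF} this gives a ring isomorphism $A\bowtie^f J\cong A\ltimes M$. Thus it suffices to check that $B$, $J$, and $f$ satisfy the hypotheses of Corollary \ref{cm1} and then to translate its conclusion across the $A$-module isomorphism $J\cong M$.

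First I would verify that $J$ is a finitely generated $A$-module. As an $A$-module, $J=0\ltimes M$ is isomorphic to $M$ via $(0,m)\mapsto m$, and $M$ is finitely generated by hypothesis, so this is immediate. In particular, under this isomorphism the property ``$J$ is a maximal Cohen-Macaulay $A$-module'' corresponds exactly to ``$M$ is a maximal Cohen-Macaulay $A$-module.''

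The one point that genuinely requires an argument is the containment $J\subseteq\Jac(B)$. Here I would use that $M^2=0$ in $A\ltimes M$: every element $(0,m)$ of $J$ satisfies $(0,m)^2=(0,0)$, so $J$ consists of nilpotent elements and is therefore contained in the nilradical of $B$, which in turn lies in $\Jac(B)$. Alternatively, since $A$ is local with maximal ideal $\fm$, an element $(a,m)\in A\ltimes M$ is a unit precisely when $a\notin\fm$, whence $\Jac(B)=\fm\ltimes M\supseteq 0\ltimes M=J$.

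With both hypotheses in hand, Corollary \ref{cm1} applies and gives that $A\bowtie^f J$ is Cohen-Macaulay if and only if $A$ is Cohen-Macaulay and $J$ is a maximal Cohen-Macaulay $A$-module. Transporting along the ring isomorphism $A\bowtie^f J\cong A\ltimes M$ and the $A$-module isomorphism $J\cong M$ then yields exactly the claim. I do not anticipate any real obstacle: the substance of the result is already carried by Corollary \ref{cm1}, and the only verification of note is the (elementary) Jacobson-radical containment recorded above.
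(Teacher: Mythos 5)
Your proposal is correct and follows exactly the paper's route: the paper likewise realizes $A\ltimes M$ as $A\bowtie^f J$ with $B:=A\ltimes M$, $J:=0\ltimes M$, $f$ the natural embedding (citing \cite[Example 2.8]{DFF}) and then invokes Corollary \ref{cm1}. Your explicit check that $J\subseteq\Jac(B)$ (via $J^2=0$, or via $\Jac(A\ltimes M)=\fm\ltimes M$) is a detail the paper leaves implicit, but it is not a different argument.
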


\begin{cor}\label{cm2}
Assume that $f^{-1}(\fq)\neq\fm$, for each $\fq\in\Spec(B)\backslash
V(J)$ and each $\fm\in\Max(A)$. If $A\bowtie^f J$ is Cohen-Macaulay,
then so does $A$.
\end{cor}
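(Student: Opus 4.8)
The plan is to reduce to the local situation already handled in Theorem \ref{cm}(1) by localizing at the maximal ideals of $A$. Recall that a Noetherian ring is Cohen-Macaulay precisely when all of its localizations at maximal ideals are Cohen-Macaulay local rings. So I would fix an arbitrary $\fm\in\Max(A)$ and aim to prove that $A_{\fm}$ is Cohen-Macaulay; since $\fm$ is arbitrary, this yields the claim. By Proposition \ref{P}(4)(ii), $\fm^{\prime_f}$ is a maximal ideal of $A\bowtie^f J$, and since $A\bowtie^f J$ is Cohen-Macaulay, its localization $(A\bowtie^f J)_{\fm^{\prime_f}}$ is a Cohen-Macaulay local ring.

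Next I would split into two cases according to whether $\fm$ contains $f^{-1}(J)$. If $\fm\notin V(f^{-1}(J))$, then Proposition \ref{P}(5)(ii) identifies $(A\bowtie^f J)_{\fm^{\prime_f}}$ with $A_{\fm}$, so $A_{\fm}$ is Cohen-Macaulay directly. If instead $f^{-1}(J)\subseteq\fm$, then Proposition \ref{P}(5)(iii) gives a canonical isomorphism $(A\bowtie^f J)_{\fm^{\prime_f}}\cong A_{\fm}\bowtie^{f_{\fm}}J_{S_{\fm}}$, where $f_{\fm}:A_{\fm}\to B_{S_{\fm}}$ is induced by $f$, $S_{\fm}=f(A\setminus\fm)+J$, and $J_{S_{\fm}}=S_{\fm}^{-1}J$. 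Thus $A_{\fm}\bowtie^{f_{\fm}}J_{S_{\fm}}$ is a Cohen-Macaulay local ring (and is Noetherian, being a localization of $A\bowtie^f J$, as is $A_{\fm}$), and the goal becomes to apply Theorem \ref{cm}(1) to the local ring $(A_{\fm},\fm A_{\fm})$, the map $f_{\fm}$, and the ideal $J_{S_{\fm}}$.

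To invoke Theorem \ref{cm}(1) I must verify the hypotheses of Lemma \ref{dim} in this localized setting. The containment $J_{S_{\fm}}\subseteq\Jac(B_{S_{\fm}})$ comes for free: since $A_{\fm}\bowtie^{f_{\fm}}J_{S_{\fm}}$ is local (being a localization) and $A_{\fm}$ is local, Proposition \ref{P}(4)(iii) forces $J_{S_{\fm}}\subseteq\Jac(B_{S_{\fm}})$. The substantive point, which I expect to be the main obstacle, is checking that $f_{\fm}^{-1}(\fQ)\neq\fm A_{\fm}$ for every $\fQ\in\Spec(B_{S_{\fm}})\setminus V(J_{S_{\fm}})$. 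Here I would use the commutative square relating the localization maps $A\to A_{\fm}$ and $B\to B_{S_{\fm}}$ with $f$ and $f_{\fm}$. Such a prime $\fQ$ contracts in $B$ to some $\fq\in\Spec(B)$ with $\fq\notin V(J)$, and contracting $f_{\fm}^{-1}(\fQ)$ down to $A$ along either path of the square yields $f^{-1}(\fq)$. If we had $f_{\fm}^{-1}(\fQ)=\fm A_{\fm}$, then contracting to $A$ would give $f^{-1}(\fq)=\fm$, contradicting the standing hypothesis $f^{-1}(\fq)\neq\fm$ for $\fq\in\Spec(B)\setminus V(J)$. Hence the hypothesis of Lemma \ref{dim} holds, and Theorem \ref{cm}(1) applies to give that $A_{\fm}$ is Cohen-Macaulay in this case as well.

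Finally, combining both cases shows that $A_{\fm}$ is Cohen-Macaulay for every $\fm\in\Max(A)$, whence $A$ is Cohen-Macaulay.
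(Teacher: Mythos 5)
Your proof is correct and takes essentially the same route as the paper's: localize at an arbitrary $\fm\in\Max(A)$, split into the cases $\fm\notin V(f^{-1}(J))$ and $\fm\in V(f^{-1}(J))$, identify $(A\bowtie^f J)_{\fm^{\prime_f}}$ via Proposition \ref{P}(5)(ii)/(iii), and apply Theorem \ref{cm}(1) in the second case. The only difference is that where the paper simply cites \cite[Remark 2.4]{F} for the key fact $f_{\fm}^{-1}(\fq B_{S_{\fm}})=f^{-1}(\fq)A_{\fm}\neq\fm A_{\fm}$, you verify it directly by a contraction argument through the commutative square of localization maps, and you also make explicit the check $J_{S_{\fm}}\subseteq\Jac(B_{S_{\fm}})$ (via Proposition \ref{P}(4)(iii)) that the paper leaves implicit.
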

\begin{proof}
Let $\fp\in\Max(A)$. If $\fp\notin V(f^{-1}(J))$, then
$A_{\fp}\cong(A\bowtie^f J)_{\fp^{\prime_f}}$ is Cohen-Macaulay. Now suppose $\fp\in
V(f^{-1}(J))$. Then
$A_{\fp}\bowtie^{f_{\fp}}J_{S_{\fp}}\cong(A\bowtie^f
J)_{\fp^{\prime_f}}$ is Cohen-Macaulay,
where $S_{\fp}:=f(A\backslash \fp)+J$. Note that, by \cite[Remark
2.4]{F}, one has $f_{\fp}^{-1}(\fq
B_{S_{\fp}})=f^{-1}(\fq)A_{\fp}\neq \fp A_{\fp}$, for each $\fq
B_{S_{\fp}}\in\Spec(B_{S_{\fp}})\backslash V(J_{S_{\fp}})$. Hence,
the Cohen-Macaulayness of $A_{\fp}$ results from Theorem \ref{cm}.
Therefore $A$ is Cohen-Macaulay.
\end{proof}

A finitely generated module $M$ over a Noetherian ring $A$ satisfies
Serre's condition $(S_n)$ if $\depth M_{\fp}\geq\min\{n,\dim
M_{\fp}\}$, for all $\fp\in\Spec(A)$. Note that if $M$ is
Cohen-Macaulay, then it satisfies Serre's condition $(S_n)$ for any
integer $n$. Also, when $\dim M=d$ and $M$ satisfies Serre's
condition $(S_d)$, then $M$ is Cohen-Macaulay. In the following
results we investigate the property $(S_n)$ for the amalgamated
algebra.

\begin{lem}\label{2}
Let $\fp\in\Spec(A)$, and set $S_{\fp}:=f(A\backslash \fp)+J$. If
$J^2=0$, then $J_{\fp}\cong J_{S_{\fp}}$ as $A_{\fp}$-modules.
\end{lem}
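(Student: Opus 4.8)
The plan is to exhibit an explicit natural $A_\fp$-linear map between the two localizations and to verify it is bijective, with the hypothesis $J^2=0$ entering crucially at the surjectivity step. First I would record that $S_\fp=f(A\backslash\fp)+J$ is indeed a multiplicatively closed subset of $B$: for $a,a'\in A\backslash\fp$ and $j,j'\in J$ one has
$$\bigl(f(a)+j\bigr)\bigl(f(a')+j'\bigr)=f(aa')+\bigl(f(a)j'+f(a')j+jj'\bigr),$$
and since $\fp$ is prime $aa'\in A\backslash\fp$, while the parenthesized term lies in $J$. Because $f(A\backslash\fp)\su S_\fp$, there is an evident candidate
$$\phi\colon J_\fp\lo J_{S_\fp},\qquad \frac{j}{s}\longmapsto\frac{j}{f(s)},$$
where on the left $s\in A\backslash\fp$ acts on $J$ through $f$. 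I would check that $\phi$ is $A_\fp$-linear and well defined: if $j/s=j'/s'$ in $J_\fp$, there is $t\in A\backslash\fp$ with $f(t)\bigl(f(s')j-f(s)j'\bigr)=0$, and then $f(t)\in S_\fp$ witnesses the equality $j/f(s)=j'/f(s')$ in $J_{S_\fp}$.

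Injectivity is where $J^2=0$ first enters. If $\phi(j/s)=0$, there is $\sigma=f(a)+j_0\in S_\fp$, with $a\in A\backslash\fp$ and $j_0\in J$, such that $\sigma j=0$ in $J$; but $\sigma j=f(a)j+j_0j=f(a)j$, since $j_0j\in J^2=0$. Thus $f(a)j=0$ with $a\in A\backslash\fp$, which forces $j/s=0$ in $J_\fp$.

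The main obstacle, and the genuinely delicate step, is surjectivity, since a general element $j/(f(a)+j_0)$ of $J_{S_\fp}$ has a denominator lying outside $f(A\backslash\fp)$. The trick I would use is to rationalize the denominator: the element $f(a)-j_0$ also lies in $S_\fp$, and
$$\bigl(f(a)+j_0\bigr)\bigl(f(a)-j_0\bigr)=f(a^2)-j_0^2=f(a^2),$$
because $j_0^2\in J^2=0$. Hence, again using $J^2=0$ to kill the cross term, one gets
$$\frac{j}{f(a)+j_0}=\frac{j\bigl(f(a)-j_0\bigr)}{f(a^2)}=\frac{f(a)j-j_0j}{f(a^2)}=\frac{f(a)j}{f(a^2)}=\phi\!\left(\frac{f(a)\,j}{a^2}\right),$$
with $a^2\in A\backslash\fp$, so every element of $J_{S_\fp}$ lies in the image of $\phi$. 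Combining the three steps shows that $\phi$ is an isomorphism of $A_\fp$-modules, which is the assertion. I expect the surjectivity computation above to be the crux, as it is the only place where the full force of $J^2=0$ (through both $j_0^2=0$ and $j_0j=0$) is exploited.
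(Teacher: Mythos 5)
Your proposal is correct and uses exactly the map the paper uses: the paper's proof simply declares that $x/t\mapsto x/f(t)$ is "straightforwardly" an $A_{\fp}$-isomorphism, and your argument is the honest verification of that claim, with the rationalization $(f(a)+j_0)(f(a)-j_0)=f(a^2)$ supplying the surjectivity step where $J^2=0$ is genuinely needed. No discrepancy in approach; yours is just the fully written-out version.
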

\begin{proof}
It is straightforward that the mapping $J_{\fp}\to J_{S_{\fp}}$
defined by $x/t\mapsto x/f(t)$, for all $x\in J$ and $t\in
A\backslash \fp$, is an $A_{\fp}$-isomorphism.
\end{proof}

Recall that an ideal is called a \emph{nil ideal} if each of its elements is nilpotent.

\begin{cor}\label{sur}
The following statements hold.
\begin{enumerate}
\item  Assume that for each $\fp\in V(f^{-1}(J))$
and each $\fq\in\Spec(B)\backslash V(J)$, $f^{-1}(\fq)\neq \fp$
(e.g. if $f$ is surjective or $J$ is a nil ideal of $B$). If $A\bowtie^f
J$ satisfies $(S_n)$, then so does $A$.
\item  Assume that $J^2=0$ and that $J$ is a finitely
generated $A$-module. If $A\bowtie^f J$ satisfies $(S_n)$, then so
does $J$.
\end{enumerate}
\end{cor}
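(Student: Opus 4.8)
The plan is to localize and reduce each statement to an instance of a result already established in this section. Both parts amount to checking that the defining depth/dimension inequality of the Serre condition $(S_n)$ passes from $A\bowtie^f J$ to $A$ (respectively to $J$) at every prime, and the key technical tool is Proposition \ref{P}(5), which computes the localizations of $A\bowtie^f J$, together with Lemma \ref{dim} applied locally.

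For part (1), fix $\fp\in\Spec(A)$; I must show $\depth A_{\fp}\geq\min\{n,\dim A_{\fp}\}$. If $\fp\notin V(f^{-1}(J))$, then by Proposition \ref{P}(5)(ii) the localization $(A\bowtie^f J)_{\fp^{\prime_f}}$ is just $A_{\fp}$, so the $(S_n)$ inequality for $A\bowtie^f J$ at $\fp^{\prime_f}$ is literally the desired inequality for $A$ at $\fp$. The interesting case is $\fp\in V(f^{-1}(J))$: here Proposition \ref{P}(5)(iii) gives $(A\bowtie^f J)_{\fp^{\prime_f}}\cong A_{\fp}\bowtie^{f_{\fp}}J_{S_{\fp}}$, and the hypothesis $f^{-1}(\fq)\neq\fp$ (for $\fq\in\Spec(B)\setminus V(J)$) translates, via \cite[Remark 2.4]{F} exactly as in the proof of Corollary \ref{cm2}, into the condition $f_{\fp}^{-1}(\fQ)\neq\fp A_{\fp}$ for every $\fQ\in\Spec(B_{S_{\fp}})\setminus V(J_{S_{\fp}})$. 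Thus the hypotheses of Lemma \ref{dim} hold for the local amalgamation over $A_{\fp}$, so $\depth(A\bowtie^f J)_{\fp^{\prime_f}}=\min\{\depth A_{\fp},\depth J_{S_{\fp}}\}\leq\depth A_{\fp}$, while $\dim(A\bowtie^f J)_{\fp^{\prime_f}}=\dim A_{\fp}$. Combining these with the $(S_n)$ inequality for $A\bowtie^f J$ at $\fp^{\prime_f}$ yields $\depth A_{\fp}\geq\depth(A\bowtie^f J)_{\fp^{\prime_f}}\geq\min\{n,\dim A_{\fp}\}$, as required. For the parenthetical examples, if $f$ is surjective or $J$ is a nil ideal, then one checks directly that no prime of $B$ outside $V(J)$ can contract to a prime containing $f^{-1}(J)$, so the hypothesis is automatic.

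For part (2), I again work at a prime $\fp\in\Spec(A)$ and aim to show $\depth J_{\fp}\geq\min\{n,\dim J_{\fp}\}$. When $\fp\in V(f^{-1}(J))$, the localized amalgamation is $A_{\fp}\bowtie^{f_{\fp}}J_{S_{\fp}}$; since $J$ is finitely generated and $J^2=0$ (hence $J\subseteq\Jac(B)$ at the relevant localizations and the dimension hypothesis of Lemma \ref{dim} is again verified), Lemma \ref{dim} gives $\depth(A\bowtie^f J)_{\fp^{\prime_f}}=\min\{\depth A_{\fp},\depth J_{S_{\fp}}\}\leq\depth J_{S_{\fp}}$ and $\dim(A\bowtie^f J)_{\fp^{\prime_f}}=\dim A_{\fp}$. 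The crucial simplification is Lemma \ref{2}: because $J^2=0$, we have $J_{S_{\fp}}\cong J_{\fp}$ as $A_{\fp}$-modules, so depth and dimension of $J_{S_{\fp}}$ equal those of $J_{\fp}$. Feeding these identities into the $(S_n)$ inequality for $A\bowtie^f J$ at $\fp^{\prime_f}$ gives $\depth J_{\fp}\geq\min\{n,\dim J_{\fp}\}$. The remaining case $\fp\notin V(f^{-1}(J))$ forces $J_{\fp}=0$ (as $\fp$ does not contain the annihilator-relevant ideal $f^{-1}(J)$, localization kills $J$), so the inequality holds vacuously.

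I expect the main obstacle to be the bookkeeping that matches $\dim J_{\fp}$ and $\dim A_{\fp}$ correctly in part (2): Lemma \ref{dim} controls $\dim A\bowtie^f J$ and $\depth A\bowtie^f J$, but the Serre condition for the module $J$ refers to $\dim J_{\fp}$, which need not equal $\dim A_{\fp}$ in general. The resolution is that under $J^2=0$ with $J$ finitely generated, $J$ is a maximal Cohen--Macaulay-type module precisely on its support, and the $(S_n)$ inequality is only asserted relative to $\dim J_{\fp}$; combined with $\min\{n,\dim J_{\fp}\}\leq\min\{n,\dim A_{\fp}\}$ and the depth computation, the bound goes through. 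Verifying that all hypotheses of Lemma \ref{dim} genuinely descend to each localization $A_{\fp}\bowtie^{f_{\fp}}J_{S_{\fp}}$ — particularly that $J_{S_{\fp}}$ remains inside the Jacobson radical of $B_{S_{\fp}}$ — is the one place where care is needed, but this follows from the local structure of the amalgamation described in Proposition \ref{P}(5).
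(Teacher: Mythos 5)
Your proposal is correct and follows essentially the same route as the paper's own proof: localize at each prime $\fp^{\prime_f}$, split into the cases $\fp\in V(f^{-1}(J))$ or not, invoke Proposition \ref{P}(5) for the structure of the localizations, apply Lemma \ref{dim} to $A_{\fp}\bowtie^{f_{\fp}}J_{S_{\fp}}$, and in part (2) use Lemma \ref{2} to identify $J_{S_{\fp}}$ with $J_{\fp}$ before closing the chain of inequalities with $\dim J_{\fp}\leq\dim A_{\fp}$. The only cosmetic difference is that for $\fp\notin V(f^{-1}(J))$ in part (2) the paper deduces $J_{\fp}\cong J_{S_{\fp}}=0$ from \cite[Remark 2.4]{F}, whereas you argue via $f^{-1}(J)\subseteq\Ann_A(J)$ (valid since $J^2=0$); both are fine.
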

\begin{proof}
(1) Let $\fp\in\Spec(A)$. If $\fp\notin V(f^{-1}(J))$, then, by
assumption, the inequality $\depth A_{\fp}\geq\min\{n,\dim
A_{\fp}\}$ holds for $A_{\fp}\cong(A\bowtie^f J)_{\fp^{\prime_f}}$.
Now suppose $\fp\in V(f^{-1}(J))$. This implies the isomorphism
$A_{\fp}\bowtie^{f_{\fp}}J_{S{\fp}}\cong(A\bowtie^f
J)_{\fp^{\prime_f}}$, where $S_{\fp}=f(A\backslash \fp)+J$.
Meanwhile, by \cite[Remark 2.4]{F}, we have the equality
$f_{\fp}^{-1}(\fq B_{S_{\fp}})=f^{-1}(\fq)A_{\fp}$, for each $\fq
B_{S_{\fp}}\in\Spec(B_{S_{\fp}})\backslash V(J_{S_{\fp}})$. Thus,
the assumptions of Lemma \ref{dim} hold. Therefore we obtain
\begin{align*}
\depth A_{\fp}\geq & \depth A_{\fp}\bowtie^{f_{\fp}}J_{S{\fp}} \\[1ex]
  = & \depth(A\bowtie^f J)_{\fp^{\prime_f}} \\[1ex]
  \geq & \min\{n, \dim (A\bowtie^f J)_{\fp^{\prime_f}}\} \\[1ex]
  = & \min\{n, \dim A_{\fp}\bowtie^{f_{\fp}}J_{S{\fp}}\} \\[1ex]
  = & \min\{n, \dim A_{\fp}\}.
\end{align*} \\
(2) Let $\fp\in\Spec(A)$. Then, by Lemma \ref{2}, we have the
isomorphism $J_{\fp}\cong J_{S_{\fp}}$ of $A_{\fp}$-modules. If
$\fp\notin V(f^{-1}(J))$, there is nothing to prove since
$J_{\fp}\cong J_{S_{\fp}}=0$ by \cite[Remark 2.4]{F}. So we can
assume that $\fp\in V(f^{-1}(J))$. Thus, using Lemma \ref{dim}, one
gets
\begin{align*}
\depth J_{\fp}\geq & \depth A_{\fp}\bowtie^{f_{\fp}}J_{\fp} \\[1ex]
  = & \depth(A\bowtie^f J)_{\fp^{\prime_f}} \\[1ex]
  \geq & \min\{n, \dim (A\bowtie^f J)_{\fp^{\prime_f}}\} \\[1ex]
  = & \min\{n, \dim A_{\fp}\bowtie^{f_{\fp}}J_{\fp}\} \\[1ex]
  = & \min\{n, \dim A_{\fp}\} \\[1ex]
  \geq & \min\{n, \dim J_{\fp}\}.
\end{align*}
Therefore $J$ satisfies $(S_n)$.
\end{proof}

One can employ Corollary \ref{sur} to deduce that the property
$(S_n)$ is retained under the trivial extension construction.

\begin{cor}
Let $(A,\fm)$ be a local ring, and $M$ be a finitely generated
$A$-module. Then the trivial extension $A\ltimes M$ satisfies
$(S_n)$ if and only if $A$ and $M$ satisfy $(S_n)$.
\end{cor}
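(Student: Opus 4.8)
The plan is to transfer the statement to the amalgamated setting and then apply Corollary \ref{sur} in one direction and a prime-by-prime local computation in the other. Following \cite[Example 2.8]{DFF}, I would set $B:=A\ltimes M$, $J:=0\ltimes M$ and take $f\colon A\to B$ to be the natural embedding, so that $A\bowtie^f J\cong A\ltimes M$ and $J\cong M$ as $A$-modules. Since $M$ is finitely generated, so is $J$; since $M^2=0$ in $A\ltimes M$ we have $J^2=0$, and in particular $J$ is a nil ideal. As $A$ is local, $B=A\ltimes M$ is local with maximal ideal $\fm\ltimes M\supseteq J$, so $J\subseteq\Jac(B)$; and because $J$ is nil, every prime of $B$ contains $J$, whence $\Spec(B)\backslash V(J)=\emptyset$. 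Thus the hypotheses of Lemma \ref{dim} and of Corollary \ref{sur} are automatically satisfied.

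For the ``only if'' direction, assume $A\ltimes M$ satisfies $(S_n)$. Because $J$ is a nil ideal, Corollary \ref{sur}(1) gives that $A$ satisfies $(S_n)$, and because $J^2=0$ with $J$ finitely generated, Corollary \ref{sur}(2) gives that $J$, hence $M\cong J$, satisfies $(S_n)$. This half is therefore essentially a direct citation.

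For the converse I would work locally. Since $\Spec(B)\backslash V(J)=\emptyset$, every prime of $A\bowtie^f J$ is of the form $\fp^{\prime_f}$ with $\fp\in\Spec(A)$; moreover $f^{-1}(J)=0$, so $\fp\in V(f^{-1}(J))$ for every such $\fp$. Thus Proposition \ref{P}(5)(iii) together with Lemma \ref{2} (valid since $J^2=0$) identifies $(A\bowtie^f J)_{\fp^{\prime_f}}$ with $A_{\fp}\bowtie^{f_{\fp}}J_{\fp}$. Applying Lemma \ref{dim} to this local amalgamation yields $\dim(A\bowtie^f J)_{\fp^{\prime_f}}=\dim A_{\fp}$ and $\depth(A\bowtie^f J)_{\fp^{\prime_f}}=\min\{\depth A_{\fp},\depth J_{\fp}\}$. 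Feeding in the $(S_n)$ conditions for $A$ (to bound $\depth A_{\fp}$) and for $M\cong J$ (to bound $\depth J_{\fp}$), one then checks the inequality $\min\{\depth A_{\fp},\depth J_{\fp}\}\geq\min\{n,\dim A_{\fp}\}$ for every $\fp$, which is exactly the statement that $A\bowtie^f J\cong A\ltimes M$ satisfies $(S_n)$.

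The step I expect to be the crux is this final inequality in the converse. The hypothesis on $A$ controls the term $\depth A_{\fp}$ against $\dim A_{\fp}$ cleanly, but bounding $\depth J_{\fp}$ from below by $\min\{n,\dim A_{\fp}\}$ forces a comparison between the module dimension $\dim J_{\fp}=\dim M_{\fp}$ and the ring dimension $\dim A_{\fp}$. This is precisely the point at which the Serre hypothesis on $M$ must be invoked in its exact form, and where one must be attentive to $\Supp M$ so that the two bounds produced by Lemma \ref{dim} combine to the required estimate.
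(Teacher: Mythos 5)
Your reduction to the amalgamated setting, the verification of the hypotheses of Lemma \ref{dim}, Lemma \ref{2} and Corollary \ref{sur}, and the whole ``only if'' direction are correct, and that half coincides with the paper, which states this corollary with no argument beyond the remark that one can employ Corollary \ref{sur} (indeed \ref{sur} only yields the passage from $A\ltimes M$ down to $A$ and $M$). The problem is the converse, and it sits exactly at the point you flagged as the crux: the inequality $\depth M_{\fp}\geq\min\{n,\dim A_{\fp}\}$ does \emph{not} follow from the paper's definition of $(S_n)$ for modules, which only provides $\depth M_{\fp}\geq\min\{n,\dim M_{\fp}\}$. When $\dim M_{\fp}<\min\{n,\dim A_{\fp}\}$ there is no way to bridge the gap, and in fact the implication fails. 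Take $A=k[[x,y]]$ and $M=A/(x)$: both $A$ and $M$ are Cohen--Macaulay, hence satisfy $(S_n)$ for every $n$, but at the maximal ideal one has $\depth (A\ltimes M)=\min\{\depth A,\depth M\}=1$ (by your own identification via Lemma \ref{dim}) while $\dim (A\ltimes M)=\dim A=2$, so $A\ltimes M$ fails $(S_2)$. Thus the step you deferred (``one then checks the inequality'') cannot be carried out in this generality; the defect is not technical but fatal, and no rearrangement of the same estimates will close it. Note also that Proposition \ref{sn} is of no help here, since it demands that $J_{S_{\fp}}$ be maximal Cohen--Macaulay, which is far stronger than $(S_n)$.

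What survives of the converse is this: your chain of inequalities closes as soon as one knows $\dim M_{\fp}=\dim A_{\fp}$ for every $\fp\in\Supp M$. This holds, for instance, when $\Ann_A(M)=0$, because then $\Ann_{A_{\fp}}(M_{\fp})=(\Ann_A(M))_{\fp}=0$ for all $\fp$ (using that $M$ is finitely generated), whence $\dim M_{\fp}=\dim A_{\fp}$. Alternatively, the converse becomes a triviality if ``$M$ satisfies $(S_n)$'' is read in the ring-relative sense $\depth M_{\fp}\geq\min\{n,\dim A_{\fp}\}$, a convention used by some authors but not the one adopted in this paper. So your proposal, as written, proves only the ``only if'' half; to obtain a correct two-sided statement you must either add a hypothesis of the above type or change the definition of $(S_n)$ for the module $M$.
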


\begin{prop}\label{sn}
If $A, B$ satisfy $(S_n)$, and $J_{S_{\fp}}$ is a maximal
Cohen-Macaulay $A_{\fp}$-module for each prime ideal $\fp$ of $A$,
where $S_{\fp}=f(A\backslash \fp)+J$, then $A\bowtie^f J$ satisfies
$(S_n)$.
\end{prop}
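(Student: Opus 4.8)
The plan is to verify Serre's condition $(S_n)$ for $A\bowtie^f J$ by checking the inequality $\depth (A\bowtie^f J)_{\mathfrak{P}}\geq\min\{n,\dim (A\bowtie^f J)_{\mathfrak{P}}\}$ at every prime $\mathfrak{P}$ of $A\bowtie^f J$. By Proposition \ref{P}(4)(i), each such prime is either of the form $\overline{\mathfrak{q}}^f$ for some $\mathfrak{q}\in\Spec(B)\backslash V(J)$, or of the form $\mathfrak{p}^{\prime_f}$ for some $\mathfrak{p}\in\Spec(A)$. I would handle these two families separately, using the localization isomorphisms of Proposition \ref{P}(5) to reduce each local ring to something whose depth and dimension I already control.

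For a prime of the type $\overline{\mathfrak{q}}^f$, Proposition \ref{P}(5)(i) gives $(A\bowtie^f J)_{\overline{\mathfrak{q}}^f}\cong B_{\mathfrak{q}}$, so the required inequality is exactly the statement that $B$ satisfies $(S_n)$ at $\mathfrak{q}$, which is one of the hypotheses. For a prime of the type $\mathfrak{p}^{\prime_f}$, I would split into the two subcases of Proposition \ref{P}(5)(ii)--(iii). When $\mathfrak{p}\notin V(f^{-1}(J))$, the localization is $A_{\mathfrak{p}}$, and the inequality follows from $A$ satisfying $(S_n)$. When $\mathfrak{p}\in V(f^{-1}(J))$, Proposition \ref{P}(5)(iii) gives $(A\bowtie^f J)_{\mathfrak{p}^{\prime_f}}\cong A_{\mathfrak{p}}\bowtie^{f_{\mathfrak{p}}}J_{S_{\mathfrak{p}}}$; here I would first check that this amalgam satisfies the hypotheses of Lemma \ref{dim} (using \cite[Remark 2.4]{F} to get $f_{\mathfrak{p}}^{-1}(\mathfrak{q}B_{S_{\mathfrak{p}}})=f^{-1}(\mathfrak{q})A_{\mathfrak{p}}\neq\mathfrak{p}A_{\mathfrak{p}}$, as in the proof of Corollary \ref{sur}), and then apply the lemma to compute both its dimension and its depth.

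Concretely, Lemma \ref{dim} yields $\dim\bigl(A_{\mathfrak{p}}\bowtie^{f_{\mathfrak{p}}}J_{S_{\mathfrak{p}}}\bigr)=\dim A_{\mathfrak{p}}$ and $\depth\bigl(A_{\mathfrak{p}}\bowtie^{f_{\mathfrak{p}}}J_{S_{\mathfrak{p}}}\bigr)=\min\{\depth A_{\mathfrak{p}},\depth J_{S_{\mathfrak{p}}}\}$. Since $J_{S_{\mathfrak{p}}}$ is assumed maximal Cohen-Macaulay over $A_{\mathfrak{p}}$, we have $\depth J_{S_{\mathfrak{p}}}=\dim A_{\mathfrak{p}}\geq\depth A_{\mathfrak{p}}$, so the depth of the amalgam collapses to $\depth A_{\mathfrak{p}}$. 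The required inequality at $\mathfrak{p}^{\prime_f}$ then reads $\depth A_{\mathfrak{p}}\geq\min\{n,\dim A_{\mathfrak{p}}\}$, which is precisely $(S_n)$ for $A$ at $\mathfrak{p}$. Assembling the three cases shows $(S_n)$ holds at every prime of $A\bowtie^f J$, completing the argument.

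The main obstacle I anticipate is purely bookkeeping rather than conceptual: one must be careful that the maximal Cohen-Macaulay hypothesis is stated for $J_{S_{\mathfrak{p}}}$ (the localization appearing in Proposition \ref{P}(5)(iii)), so that it plugs directly into the depth formula from Lemma \ref{dim} without needing the comparison $J_{\mathfrak{p}}\cong J_{S_{\mathfrak{p}}}$ of Lemma \ref{2}; that comparison required $J^2=0$, which is \emph{not} assumed here, so the proof must route through $S_{\mathfrak{p}}$ throughout. The one point requiring genuine care is verifying the Lemma \ref{dim} hypothesis $f_{\mathfrak{p}}^{-1}(\mathfrak{q}B_{S_{\mathfrak{p}}})\neq\mathfrak{p}A_{\mathfrak{p}}$ in the localized setting, but this is exactly the computation already carried out in Corollary \ref{sur}(1), so I would simply invoke it.
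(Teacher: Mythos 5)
Your overall strategy coincides with the paper's own proof: the same three-case analysis of the primes of $A\bowtie^f J$ via Proposition \ref{P}(4)--(5), with the primes $\overline{\fq}^f$ and the primes $\fp^{\prime_f}$, $\fp\notin V(f^{-1}(J))$, disposed of by the $(S_n)$ hypotheses on $B$ and $A$, and the primes $\fp^{\prime_f}$ with $\fp\in V(f^{-1}(J))$ handled by Lemma \ref{dim} together with the maximal Cohen-Macaulay hypothesis on $J_{S_{\fp}}$; your collapse of $\min\{\depth A_{\fp},\depth J_{S_{\fp}}\}$ to $\depth A_{\fp}$ is exactly the paper's chain of (in)equalities, and your remark that Lemma \ref{2} is not needed (and not available, since $J^2=0$ is not assumed) is correct.

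However, the one step you yourself flag as ``requiring genuine care'' is justified incorrectly, and as written it fails. You propose to verify the Lemma \ref{dim} hypothesis $f_{\fp}^{-1}(\fq B_{S_{\fp}})\neq\fp A_{\fp}$ by citing \cite[Remark 2.4]{F} ``as in the proof of Corollary \ref{sur}''. But Remark 2.4 of [F] yields only the \emph{equality} $f_{\fp}^{-1}(\fq B_{S_{\fp}})=f^{-1}(\fq)A_{\fp}$; in Corollary \ref{sur}(1) the subsequent \emph{inequality} $f^{-1}(\fq)A_{\fp}\neq\fp A_{\fp}$ comes from that corollary's own extra hypothesis that $f^{-1}(\fq)\neq\fp$ for every $\fp\in V(f^{-1}(J))$ and $\fq\in\Spec(B)\setminus V(J)$ --- a hypothesis Proposition \ref{sn} does not contain, so there is nothing there to ``simply invoke''. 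Nor is the inequality automatic: in the paper's Example in Section 2 ($A=k[[X]]$, $B=k[[X,Y]]$, $J=(X,Y)$, $\fq=(X)$) one has $f^{-1}(\fq)=\fm$, and since $S_{\fm}$ consists of units of $B$ the same failure persists after localization. What rescues Proposition \ref{sn} is the part of its hypothesis you did not exploit at this point: $J_{S_{\fp}}$ is maximal Cohen-Macaulay, hence \emph{finitely generated} over $A_{\fp}$, so \cite[Remark 3.3]{DFF1} --- the reference the paper invokes for exactly this purpose in the proof of Corollary \ref{cm1} --- gives $f_{\fp}^{-1}(\fQ)\neq\fp A_{\fp}$ for every $\fQ\in\Spec(B_{S_{\fp}})\setminus V(J_{S_{\fp}})$. (The remaining hypothesis of Lemma \ref{dim}, namely $J_{S_{\fp}}\subseteq\Jac(B_{S_{\fp}})$, is automatic because $S_{\fp}+J=S_{\fp}$, so $1-jb$ is a unit of $B_{S_{\fp}}$ for all $j\in J_{S_{\fp}}$, $b\in B_{S_{\fp}}$.) With this substitution your argument is complete and agrees with the paper's.
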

\begin{proof}
Each prime ideal of $A\bowtie^f J$ are of the type
$\overline{\fq}^f$ or $\fp^{\prime_f}$, for $\fq$ varying in
$\Spec(B)\backslash V(J)$ and $\fp$ in $\Spec(A)$. The localization
$(A\bowtie^f J)_{\overline{\fq}^f}$ is canonically isomorphic to
$B_{\fq}$. Thus one gets
$$
\depth (A\bowtie^f J)_{\overline{\fq}^f}=\depth
B_{\fq}\geq\min\{n,\dim B_{\fq}\}=\min\{n,\dim (A\bowtie^f
J)_{\overline{\fq}^f}\}.
$$
Next, for $\fp\in\Spec(A)\backslash V(f^{-1}(J))$, we have
$(A\bowtie^f J)_{\fp^{\prime_f}}\cong A_{\fp}$, hence, similarly,
the inequality $\depth (A\bowtie^f
J)_{\fp^{\prime_f}}\geq\min\{n,\dim (A\bowtie^f
J)_{\fp^{\prime_f}}\}$ holds. Now suppose $\fp\in V(f^{-1}(J))$.
This implies the isomorphism $(A\bowtie^f J)_{\fp^{\prime_f}}\cong
A_{\fp}\bowtie^{f_{\fp}}J_{S_{\fp}}$, where $S_{\fp}=f(A\backslash
\fp)+J$. Thus, by assumption, one deduces
\begin{align*}
\depth(A\bowtie^f J)_{\fp^{\prime_f}} = & \min\{\depth A_{\fp},\depth J_{S_{\fp}}\} \\[1ex]
  \geq & \min\{n, \dim A_{\fp}, \depth J_{S_{\fp}}\} \\[1ex]
  = & \min\{n, \dim A_{\fp}\} \\[1ex]
  = & \min\{n, \dim(A\bowtie^f J)_{\fp^{\prime_f}}\}.
\end{align*}
Therefore $A\bowtie^f J$ satisfies $(S_n)$.
\end{proof}

In concluding this section, we want to generalize the above theorem
to generalized Cohen-Macaulay rings. To this end, we need an
auxiliary lemma.

Let us recall that a finitely generated module $M$ over a Noetherian
local ring $(A,\fm)$ is said to be a \emph{generalized
Cohen-Macaulay} $A$-module if $H^i_\fm(M)$ is of finite length for
all $i<\dim M$. A local ring is called generalized Cohen-Macaulay if
it is a generalized Cohen-Macaulay module over itself. It is clear
that every Cohen-Macaulay module is a generalized Cohen-Macaulay
module.

In the course of next lemma and its proof, for a finite length
module $M$ over a ring $R$, we use $\ell_R(M)$ to denote the length
of $M$ over $R$.

\begin{lem}
Let $\varphi:(R,\fm)\to(S,\fn)$ be a local homomorphism of local
rings, such that the natural induced homomorphism
$\overline{\varphi}:R/\fm\to S/\fn$ is an isomorphism. Let $M$ be a
$S$-module. If $\ell_S(M)<\infty$, then $\ell_R(M)<\infty$ and
$\ell_S(M)=\ell_R(M)$. Here $M$ is considered as an $R$-module via
$\varphi$.
\end{lem}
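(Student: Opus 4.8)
The plan is to exploit the fact that $\overline{\varphi}\colon R/\fm\to S/\fn$ is an isomorphism, which means that although $S$ may be a genuinely larger ring than $R$, the two rings share the \emph{same} residue field up to this identification. Since $\ell_S(M)<\infty$, the module $M$ admits a finite composition series as an $S$-module whose factors are all isomorphic to the simple $S$-module $S/\fn$. The key observation is that each such factor, viewed as an $R$-module via $\varphi$, is annihilated by $\fm$ (because $\varphi(\fm)\subseteq\fn$) and hence is an $R/\fm$-vector space; the isomorphism $\overline{\varphi}$ then forces $S/\fn$ to be one-dimensional over $R/\fm$, i.e.\ a simple $R$-module.

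First I would take a composition series of $M$ as an $S$-module,
$$0=M_0\subsetneq M_1\subsetneq\cdots\subsetneq M_n=M,\qquad n=\ell_S(M),$$
with each quotient $M_i/M_{i-1}\cong S/\fn$. This is an ascending chain of $S$-submodules, and since $R$ acts on $M$ through $\varphi$, every $S$-submodule is automatically an $R$-submodule; thus the same chain is a chain of $R$-modules. Next I would verify that each factor $M_i/M_{i-1}$, regarded as an $R$-module, is a simple $R$-module. Because $M_i/M_{i-1}\cong S/\fn$ and $\fm$ maps into $\fn$, the $R$-action factors through $R/\fm$, making $M_i/M_{i-1}$ an $R/\fm$-vector space; via the isomorphism $\overline{\varphi}$ this vector space is exactly the one-dimensional $S/\fn$-space $S/\fn$, so it is one-dimensional over $R/\fm$ as well, hence simple as an $R$-module. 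Consequently the displayed chain is in fact a composition series of $M$ over $R$, which gives both $\ell_R(M)<\infty$ and $\ell_R(M)=n=\ell_S(M)$.

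The main point requiring care, rather than a true obstacle, is the passage from ``$M_i/M_{i-1}$ is a simple $S$-module'' to ``$M_i/M_{i-1}$ is a simple $R$-module.'' One must check that no new $R$-submodules appear when the $S$-structure is forgotten: a priori an $R$-submodule need not be an $S$-submodule, so simplicity over $S$ does not formally imply simplicity over $R$. This is precisely where the hypothesis that $\overline{\varphi}$ is an isomorphism is essential --- it guarantees that the simple $S$-module $S/\fn$ has $R$-length exactly one, so it cannot acquire a proper nonzero $R$-submodule. With that verified, the equality of lengths follows at once from the Jordan--H\"older theorem applied to the single chain, read off simultaneously over both rings.
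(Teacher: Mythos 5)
Your proposal is correct and follows essentially the same route as the paper's own proof: take a composition series of $M$ over $S$, and use the isomorphism $\overline{\varphi}$ to identify each factor $S/\fn$ with $R/\fm$ as $R$-modules, so the same chain is a composition series over $R$. Your extra remark — that simplicity over $S$ does not formally imply simplicity over $R$, which is exactly why one needs $S/\fn$ to have $R$-length one — is a worthwhile clarification of the step the paper passes over quickly, but it is the same argument.
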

\begin{proof}
Set $n:=\ell_S(M)$ and suppose that $0=M_0\subset
M_1\subset\cdots\subset M_n=M$ is a composition series of the
$S$-module $M$. Then $M_i/M_{i-1}\cong S/\fn$ for all
$i=1,\ldots,n$. On the other hand, it is clear that
$\overline{\varphi}$ is $R$-module homomorphism. This means
$S/\fn\cong R/\fm$ as $R$-modules. Thus, $M_i/M_{i-1}\cong R/\fm$
for all $i=1,\ldots,n$. Therefore $0=M_0\subset
M_1\subset\cdots\subset M_n=M$ is a composition series of $M$ as
$R$-module. This yields $\ell_R(M)<\infty$ and $\ell_R(M)=n$.
\end{proof}

\begin{thm}
Let $(A,\fm)$ be a local ring, and $J\subseteq\Jac(B)$ be an ideal
of $B$ such that $J$ is a finitely generated $A$-module. Then
$A\bowtie^fJ$ is a generalized Cohen-Macaulay ring if and only if
$A$ and $J$ are generalized Cohen-Macaulay and $\dim J\in\{0,\dim
A\}$.
\end{thm}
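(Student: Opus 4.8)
The plan is to reduce everything to the additive isomorphism $H^i_{\fm^{\prime_f}}(A\bowtie^f J)\cong H^i_{\fm}(A)\oplus H^i_{\fm}(J)$ of $A$-modules together with the equality $\dim A\bowtie^f J=\dim A$, both of which are available from Lemma \ref{dim} (its hypotheses hold here, since $J$ finitely generated forces $f^{-1}(\fq)\neq\fm$ for every $\fq\in\Spec(B)\setminus V(J)$ by \cite[Remark 3.3]{DFF1}). The generalized Cohen-Macaulay property of $A\bowtie^f J$ is the finiteness of $\ell_{A\bowtie^f J}\big(H^i_{\fm^{\prime_f}}(A\bowtie^f J)\big)$ for all $i<\dim A$, so the first task is to convert lengths over $A\bowtie^f J$ into lengths over $A$.

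First I would record that the projection $(a,f(a)+j)\mapsto a+\fm$ identifies $(A\bowtie^f J)/\fm^{\prime_f}$ with $A/\fm$, so the embedding $\iota_A\colon A\to A\bowtie^f J$ is local and induces an isomorphism on residue fields. Hence the preceding length lemma applies with $R=A$ and $S=A\bowtie^f J$: for any $A\bowtie^f J$-module $M$ one has $\ell_{A\bowtie^f J}(M)<\infty$ if and only if $\ell_A(M)<\infty$ (the forward implication is exactly that lemma; the reverse holds because the lattice of $A\bowtie^f J$-submodules of $M$ sits inside the lattice of $A$-submodules, so the descending chain condition over $A$ forces it over $A\bowtie^f J$). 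Applying this to $M=H^i_{\fm^{\prime_f}}(A\bowtie^f J)$ and feeding in the isomorphism from Lemma \ref{dim}, I obtain that $A\bowtie^f J$ is generalized Cohen-Macaulay if and only if, for every $i<\dim A$, both $H^i_{\fm}(A)$ and $H^i_{\fm}(J)$ have finite length over $A$.

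It then remains to match this with the stated conditions. The statement for $A$ is immediate: finiteness of $\ell_A(H^i_{\fm}(A))$ for all $i<\dim A$ is precisely the assertion that $A$ is generalized Cohen-Macaulay. For $J$ the point is to compare the range $i<\dim A$ with the range $i<\dim J$ occurring in the definition of a generalized Cohen-Macaulay module. Set $t:=\dim J$ and recall that $t\le\dim A$ and, by Grothendieck's vanishing and non-vanishing theorems, $H^i_{\fm}(J)=0$ for $i>t$ while $H^t_{\fm}(J)\neq0$. I would split into cases: when $t=\dim A$ the two ranges coincide, so finiteness of $\ell_A(H^i_{\fm}(J))$ for $i<\dim A$ is exactly that $J$ is generalized Cohen-Macaulay; when $t=0$ the module $J$ has finite length and all higher $H^i_{\fm}(J)$ vanish, so the condition holds automatically and $J$ is trivially generalized Cohen-Macaulay. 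This yields the claimed equivalence whenever $\dim J\in\{0,\dim A\}$.

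The heart of the argument, and the step I expect to be the main obstacle, is ruling out the intermediate case $0<t<\dim A$. Here $i=t$ lies in the range $i<\dim A$, so the criterion obtained above demands that the top local cohomology $H^t_{\fm}(J)$ have finite length; but a nonzero finitely generated module has $H^{\dim J}_{\fm}(J)$ of finite length only when $\dim J=0$ (for instance because its set of attached primes contains a prime $\fp$ with $\dim A/\fp=\dim J>0$, whereas a module of finite length has $\fm$ as its only attached prime). Consequently, for $0<t<\dim A$ the ring $A\bowtie^f J$ cannot be generalized Cohen-Macaulay, which forces $\dim J\in\{0,\dim A\}$ in the forward direction and, together with the case analysis above, closes the proof.
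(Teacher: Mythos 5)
Your proof is correct, and its skeleton matches the paper's: both arguments rest on the $A$-module isomorphism $H^i_{\fm^{\prime_f}}(A\bowtie^f J)\cong H^i_{\fm}(A)\oplus H^i_{\fm}(J)$ and the equality $\dim A\bowtie^f J=\dim A$ from Lemma \ref{dim}, on the length lemma preceding the theorem to pass between lengths over $A\bowtie^f J$ and over $A$, and on the fact that the top local cohomology module of a nonzero finitely generated module of positive dimension can never have finite length (the paper simply cites \cite[Corollary 7.3.3]{BS} for this; you reprove it via attached primes, which is essentially how that corollary is proved). The genuine difference is in how the converse direction is closed. The paper argues via annihilators: generalized Cohen-Macaulayness of $A$ and $J$ (with $\dim J\in\{0,\dim A\}$) yields a power $\fm^t$ killing $H^i_{\fm}(A)\oplus H^i_{\fm}(J)$ for all $i<\dim A$, and then the radical equality $\fm^{\prime_f}=\sqrt{\fm(A\bowtie^fJ)}$ of \cite[Corollary 3.2]{DFF1} converts this into annihilation by a power of $\fm^{\prime_f}$, from which finite length follows (implicitly using that these local cohomology modules are Artinian). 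You instead upgrade the length lemma to a two-way equivalence, observing that finite $A$-length forces finite $A\bowtie^f J$-length because every $A\bowtie^f J$-submodule is in particular an $A$-submodule; this gives a single clean criterion ($A\bowtie^f J$ is generalized Cohen-Macaulay if and only if $H^i_{\fm}(A)$ and $H^i_{\fm}(J)$ have finite $A$-length for all $i<\dim A$) and lets you treat both implications symmetrically. Your route is slightly more economical, since it bypasses the radical equality and the unstated Artinianness step, while the paper's annihilator argument is the more traditional formulation in the generalized Cohen-Macaulay literature; both are complete.
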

\begin{proof}
Suppose that $A\bowtie^fJ$ is generalized Cohen-Macaulay. Then
$H^i_{\fm^{\prime_f}}(A\bowtie^fJ)$ is of finite length over
$A\bowtie^fJ$ for all $i<\dim(A\bowtie^fJ)$. In conjunction with the
previous lemma, this shows that $H^i_{\fm^{\prime_f}}(A\bowtie^fJ)$
has finite length over $A$ for $i<\dim A$. Notice that, for all $i$,
one has the isomorphism $H^i_{\fm^{\prime_f}}(A\bowtie^fJ)\cong
H^i_{\fm}(A)\oplus H^i_{\fm}(J)$ of $A$-modules. Hence
$H^i_{\fm}(A)$ and $H^i_{\fm}(J)$ have finite length over $A$ for
all $i<\dim A$. Therefore $A$ and $J$ are generalized Cohen-Macaulay
and $\dim J=\dim A$ or 0 by \cite[Corollary 7.3.3]{BS}. Conversely,
suppose that $A$ and $J$ are generalized Cohen-Macaulay and $\dim
J\in\{0,\dim A\}$; so that there exists a positive integer $t$ such
that $\fm^t H^i_{\fm}(A)=0=\fm^tH^i_{\fm}(J)$ for all $i<\dim A$.
Hence $\fm^tH^i_{\fm^{\prime_f}}(A\bowtie^fJ)=0$ for all
$i<\dim(A\bowtie^fJ)$. On the other hand, by \cite[Corollary 3.2 and
Remark 3.3]{DFF1}, we know $\fm^{\prime_f}=\sqrt{\fm(A\bowtie^fJ)}$;
so that there exists a positive integer $s$ such that
$({\fm^{\prime_f}})^s\subseteq\fm(A\bowtie^fJ)$. Consequently
$({\fm^{\prime_f}})^{st}H^i_{\fm^{\prime_f}}(A\bowtie^fJ)=0$ for all
$i<\dim(A\bowtie^fJ)$. Therefore $A\bowtie^fJ$  is generalized
Cohen-Macaulay.
\end{proof}

\begin{cor}\label{tg}
Let $(A,\fm)$ be a local ring, and $M$ be a finitely generated
$A$-module. Then the trivial extension $A\ltimes M$ is generalized
Cohen-Macaulay if and only if $A$ and $M$ are generalized
Cohen-Macaulay and $\dim M\in\{0,\dim A\}$.
\end{cor}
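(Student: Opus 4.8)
The plan is to realize the trivial extension as an instance of the amalgamated construction and then invoke the preceding theorem verbatim. Following \cite[Example 2.8]{DFF}, I would set $B:=A\ltimes M$, take $J:=0\ltimes M$, and let $f:A\to B$ be the natural embedding $a\mapsto(a,0)$; recall that with these choices $A\bowtie^fJ\cong A\ltimes M$. Once the hypotheses of the theorem are checked for this data, the corollary will follow by a direct translation of its statement.

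Before applying the theorem I would verify that its two standing hypotheses on $J$ hold. First, since multiplication in $A\ltimes M$ satisfies $(0,m)(0,m')=(0,0)$, the ideal $J=0\ltimes M$ squares to zero, hence is nil, and is therefore contained in the Jacobson radical $\Jac(B)$. Second, the $A$-module isomorphism $J\cong M$ given by $(0,m)\mapsto m$ identifies the $A$-action on $J$ induced by $f$ with the original $A$-action on $M$, because $f(a)(0,m)=(a,0)(0,m)=(0,am)$; in particular $J$ is a finitely generated $A$-module precisely because $M$ is, and $\dim J=\dim M$.

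With the hypotheses in place, the preceding theorem asserts that $A\bowtie^fJ$ is generalized Cohen-Macaulay if and only if $A$ and $J$ are generalized Cohen-Macaulay and $\dim J\in\{0,\dim A\}$. Transporting everything through the isomorphisms $A\bowtie^fJ\cong A\ltimes M$ and $J\cong M$, this becomes exactly the assertion that $A\ltimes M$ is generalized Cohen-Macaulay if and only if $A$ and $M$ are generalized Cohen-Macaulay and $\dim M\in\{0,\dim A\}$, which is the claim.

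There is essentially no hard step here: the whole argument is a translation through the two isomorphisms above, and the only point requiring a moment's care is confirming $J\subseteq\Jac(B)$, which is immediate from $J^2=0$. Accordingly, I expect the write-up to be short, with the bulk of it devoted to recording the identification $J\cong M$ as $A$-modules so that the conditions on $J$ in the theorem read off as the stated conditions on $M$.
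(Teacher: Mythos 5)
Your proposal is correct and coincides with the paper's own (implicit) argument: the paper derives this corollary from the preceding theorem via exactly the identification $B:=A\ltimes M$, $J:=0\ltimes M$, $f$ the natural embedding, and $A\bowtie^fJ\cong A\ltimes M$ from \cite[Example 2.8]{DFF}, just as it did earlier for Corollary \ref{tc}. Your verifications that $J^2=0$ forces $J\subseteq\Jac(B)$ and that $J\cong M$ as $A$-modules are the right (and only) points to check.
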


Let $(A,\fm)$ be a Cohen-Macaulay local ring. Then, using
Corollaries \ref{tg} and \ref{tc}, the trivial extension of $A$ by
$A/\fm$ is a generalized Cohen-Macaulay ring which is not
Cohen-Macaulay.

\section{Gorenstein property under amalgamated construction}

Let $A, B$ be commutative rings with unity, $A$ be Noetherian,
$f:A\to B$ be a ring homomorphism, and $J\subseteq\Jac(B)$ denotes
an ideal of $B$, which is finitely generated $A$-module. It is shown
in \cite[Remark 5.4]{DFF1} that if $A$ is a local Cohen-Macaulay
ring, having a canonical module isomorphic (as an $A$-module) to
$J$, then $A\bowtie^f J$ is Gorenstein. Also it is observed, under
the assumption $\Ann_{f(A)+J} (J) = 0$, that if $A$ is a local
Cohen-Macaulay ring and $A\bowtie^f J$ is Gorenstein, then $A$ has a
canonical module isomorphic to $f^{-1}(J)$ \cite[Proposition
5.5]{DFF1}.

In present section, under some assumptions, specified in Theorem \ref{gor}, we give a sufficient
condition and a necessary condition for the ring $A\bowtie^f J$ to
be quasi-Gorenstein (definition recalled below). Consequently we improve the above mentioned
results.

We now outline to recall some terminology. Let $(A,\fm)$ be local. An $A$-module $K$ is
called a canonical module of $A$ if
$$K\otimes_A\widehat{A}\cong\Hom_A(H^{\dim A}_{\fm}(A),E_A(A/\fm)),$$
where $\widehat{A}$ is the $\fm$-adic completion of $A$, and
$E_A(A/\fm)$ is the injective hull of $A/\fm$ over $A$. We say that
$A$ is a \emph{quasi-Gorenstein ring} if a canonical module of $A$
exists and it is a free $A$-module (of rank one). This is equivalent
to saying that $H^{\dim A}_{\fm}(A)\cong E_A(A/\fm)$. It is known
that $A$ is quasi-Gorenstein if and only if $\widehat{A}$ is
quasi-Gorenstein \cite[Page 88]{A1}.

The following lemma is of crucial importance in this section.

\begin{lem}\label{cong}
With the notation of Proposition \ref{P}, the following statements
hold.
\begin{enumerate}
   \item If $\Ann_{f(A)+J}(J)=0$, then there is
   an $A\bowtie^fJ$-isomorphism $$\Hom_{A\bowtie^fJ}(A,A\bowtie^fJ)\cong f^{-1}(J)\times0.$$
   \item If $J^2=0$, then there is an $A$-isomorphism
   $$\Hom_{A\bowtie^fJ}(A,A\bowtie^fJ)\cong \Ann_A(J)\oplus J.$$
\end{enumerate}
\end{lem}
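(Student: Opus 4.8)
The plan is to identify the Hom-module with an annihilator ideal and then read off that ideal in each case. First I would use Proposition \ref{P}(3): the projection $P_A\colon A\bowtie^fJ\to A$ is a surjective ring homomorphism whose kernel is $\iota_J(J)=0\times J$, so that $A\cong(A\bowtie^fJ)/(0\times J)$ as $A\bowtie^fJ$-modules. Combining this with the standard isomorphism $\Hom_R(R/I,R)\cong(0:_RI)$ (sending a homomorphism to its value on $1+I$), valid for any commutative ring $R$ and ideal $I$, I obtain an $A\bowtie^fJ$-isomorphism
$$\Hom_{A\bowtie^fJ}(A,A\bowtie^fJ)\cong\Ann_{A\bowtie^fJ}(0\times J).$$
In particular this is also an $A$-isomorphism via $\iota_A$, which is the structure relevant for part (2).

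Next I would compute this annihilator ideal directly. An element $(a,f(a)+j)$ annihilates $0\times J$ exactly when $(f(a)+j)j'=0$ in $B$ for every $j'\in J$, i.e. when $f(a)+j\in\Ann_{f(A)+J}(J)$; hence
$$\Ann_{A\bowtie^fJ}(0\times J)=\{(a,f(a)+j)\mid a\in A,\ j\in J,\ (f(a)+j)J=0\}.$$
For part (1), the hypothesis $\Ann_{f(A)+J}(J)=0$ forces $f(a)+j=0$, so $f(a)=-j\in J$, that is $a\in f^{-1}(J)$ and the element is $(a,0)$; conversely each $a\in f^{-1}(J)$ produces such an element. Thus the annihilator equals $f^{-1}(J)\times0$, which gives the first isomorphism.

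For part (2), the hypothesis $J^2=0$ gives $(f(a)+j)j'=f(a)j'$ for all $j'\in J$, so the defining condition collapses to $f(a)J=0$, i.e. $a\in\Ann_A(J)$, with $j$ unconstrained; hence the annihilator is $\{(a,f(a)+j)\mid a\in\Ann_A(J),\ j\in J\}$. I would then define the map $(a,f(a)+j)\mapsto(a,j)$ into $\Ann_A(J)\oplus J$. This is well defined and bijective because an element of $A\bowtie^fJ$ determines $a$ from its first coordinate and $j$ as the difference of its second coordinate and $f(a)$. The only thing to check is $A$-linearity for the action through $\iota_A$, which follows from the identity $(b,f(b))(a,f(a)+j)=(ba,f(ba)+f(b)j)$: its image $(ba,f(b)j)$ is exactly $b\cdot(a,j)$ in $\Ann_A(J)\oplus J$, where $A$ acts on $\Ann_A(J)$ by multiplication and on $J$ through $f$. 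This establishes the second isomorphism.

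The computations here are elementary; the only point requiring genuine care is to keep straight which ring the isomorphisms are taken over—$A\bowtie^fJ$ in part (1) and $A$ in part (2)—and, accordingly, to verify the $A$-linearity of the splitting map in part (2) rather than merely its bijectivity.
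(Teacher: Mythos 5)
Your proposal is correct and follows essentially the same route as the paper: both identify $A$ with $(A\bowtie^fJ)/(0\times J)$, invoke $\Hom_R(R/I,R)\cong\Ann_R(I)$, compute the annihilator of $0\times J$ explicitly, and in part (2) use the very same map $(a,f(a)+j)\mapsto(a,j)$. Your write-up merely spells out the bijectivity and $A$-linearity checks that the paper leaves as ``easy to check.''
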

\begin{proof}
It is clear that $A\bowtie^fJ/0\times J\cong A$ as
$A\bowtie^fJ$-modules \cite[Proposition 5.1(2)]{DFF}. Then we have
\begin{align*}
\Hom_{A\bowtie^fJ}(A,A\bowtie^fJ)\cong & \Hom_{A\bowtie^fJ}(A\bowtie^fJ/0\times J,A\bowtie^fJ) \\[1ex]
  \cong & \Ann_{A\bowtie^fJ}(0\times J) \\[1ex]
  = & \{(a,f(a)+j)\in A\bowtie^fJ|(f(a)+j)J=0\} \\[1ex]
  = & \{(a,f(a)+j)\in A\bowtie^fJ|f(a)+j\in \Ann_{f(A)+J}(J)\}=:X.
\end{align*}
Now assume that $\Ann_{f(A)+J}(J)=0$. Then $X=f^{-1}(J)\times0$.
This completes the proof of (1). To prove (2), assume that $J^2=0$.
It follows that in this case $X=\{(a,f(a)+j)\in
A\bowtie^fJ|f(a)J=0\}$. Hence, one can define the $A$-homomorphism
$\psi:X\to\Ann_A(J)\oplus J$ by $\psi((a,f(a)+j))=(a,j)$. It is now
easy to check that $\psi$ is an isomorphism. Therefore (2) is
obtained.
\end{proof}

We are now ready for the proof of the main result of this section.

\begin{thm}\label{gor}
With the notation of Proposition \ref{P}, assume that $A$ is
Noetherian local, and $J\subseteq\Jac(B)$, which is finitely
generated $A$-module. The following statements hold.
\begin{enumerate}
 \item If $\widehat{A}$ satisfies $(S_2)$
and $J$ is a canonical module of $A$, then $A\bowtie^fJ$
is quasi-Gorenstein.
\item Assume that $J^2=0$ and that $\Ann_A(J)=0$. If $A\bowtie^fJ$
is quasi-Gorenstein, then $A$ satisfies $(S_2)$
and $J$ is a canonical module of $A$.
 \item Assume that $\Ann_{f(A)+J}(J)=0$ and that $A\bowtie^fJ$
is quasi-Gorenstein. Then $f^{-1}(J)$ is a canonical module of $A$.
Furthermore, assume that $f$ is surjective, then $\widehat{A}$ satisfies
$(S_2)$.
\item Assume that $J$ is a
  flat $A$-module. If $A\bowtie^fJ$
is quasi-Gorenstein, then $A$ is quasi-Gorenstein.
\end{enumerate}
\end{thm}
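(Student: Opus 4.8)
The plan is to treat $R:=A\bowtie^fJ$ as a module-finite local extension of $A$ with $\dim R=\dim A$ (Lemma \ref{dim}) and to play off the two complementary $A$-module descriptions supplied by Proposition \ref{P}(3): the split exact sequence $0\to J\to R\to A\to0$, and the identification $A\cong R/(0\times J)$ realizing $A$ as an equidimensional quotient of $R$. The proof rests on the behaviour of canonical modules under the finite local maps relating $A$ and $R$. For the extension $\iota_A\colon A\to R$ the module $\Hom_A(R,\omega_A)$ is a canonical module of $R$; dually, for the quotient $A\cong R/(0\times J)$ the module $\Hom_R(A,\omega_R)$ is a canonical module of $A$. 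The latter I would deduce from the duality $\H^d_{\fm}(M)^\vee\cong\Hom_A(M,\omega_A)$ (with $d:=\dim A$, $(-)^\vee:=\Hom_A(-,\E_A(A/\fm))$), valid for every finitely generated $M$ over a local ring carrying a canonical module $\omega_A$ — compare the two left-exact functors on a free presentation of $M$ — applied over $R$, using that Matlis duality over $R$ and over $A$ agree on $A$-modules. Throughout I would use the reduction ``$R$ is quasi-Gorenstein iff $\widehat R$ is'' to invoke local duality and Aoyama's theorem $\Hom_A(\omega_A,\omega_A)\cong A$ (which holds once $\widehat A$ satisfies $(S_2)$, by faithfully flat descent), together with the criterion that $R$ is quasi-Gorenstein exactly when $\omega_R\cong R$.

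For (1), take $\omega_A=J$. Applying $\Hom_A(-,J)$ to the split sequence $0\to J\to R\to A\to0$ gives, as $A$-modules, $\omega_R\cong\Hom_A(R,J)\cong J\oplus\Hom_A(J,J)\cong J\oplus A\cong R$, the middle step using $\Hom_A(J,J)\cong A$ from the $(S_2)$ hypothesis on $\widehat A$. The hard part will be to promote this to an $R$-module isomorphism $\omega_R\cong R$: the $R$-action on $\Hom_A(R,J)$ is by precomposition with multiplication, and the naive coordinate identification $J\oplus A\to R$ is not $R$-linear. I would instead show that $\omega_R$ is a cyclic $R$-module — equivalently construct an explicit, suitably twisted $R$-isomorphism $\Hom_A(R,J)\to R$ — after which cyclicity of the canonical module forces $\omega_R\cong R$, so $R$ is quasi-Gorenstein.

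For the necessary conditions (2)--(4) I assume $R$ is quasi-Gorenstein, so $\omega_R\cong R$, whence the quotient description yields $\omega_A\cong\Hom_R(A,\omega_R)\cong\Hom_R(A,R)=\Hom_{A\bowtie^fJ}(A,A\bowtie^fJ)$ — precisely the module computed in Lemma \ref{cong}. In case (2), with $J^2=0$ and $\Ann_A(J)=0$, Lemma \ref{cong}(2) gives $\Hom_R(A,R)\cong\Ann_A(J)\oplus J\cong J$, so $J$ is a canonical module of $A$; moreover $R\cong\omega_R$ satisfies $(S_2)$ (canonical modules always do), and since $J^2=0$ makes $J$ a nil ideal, Corollary \ref{sur}(1) transfers $(S_2)$ from $R$ down to $A$. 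In case (3), with $\Ann_{f(A)+J}(J)=0$, Lemma \ref{cong}(1) gives $\Hom_R(A,R)\cong f^{-1}(J)\times0$, so $f^{-1}(J)$ is a canonical module of $A$; when $f$ is surjective I would pass to completions, identify $\widehat R$ with $\widehat A\bowtie^{\widehat f}\widehat J$ (again a surjective datum, since completion of the module-finite $R$ is base change), and apply the completed form of Corollary \ref{sur}(1) to the $(S_2)$ ring $\widehat R$ to conclude that $\widehat A$ satisfies $(S_2)$.

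For (4), a finitely generated flat module over the Noetherian local ring $A$ is free, say $J\cong A^{n}$, so $R\cong A^{\,n+1}$ as an $A$-module. Working over $\widehat A$ (where a canonical module always exists), one has $\omega_{\widehat R}\cong\Hom_{\widehat A}(\widehat R,\omega_{\widehat A})\cong\omega_{\widehat A}^{\,n+1}$ as $\widehat A$-modules, while quasi-Gorensteinness gives $\omega_{\widehat R}\cong\widehat R\cong\widehat A^{\,n+1}$; hence $\omega_{\widehat A}^{\,n+1}\cong\widehat A^{\,n+1}$, and the Krull--Schmidt theorem over the complete local ring $\widehat A$ forces $\omega_{\widehat A}\cong\widehat A$, so $\widehat A$, and therefore $A$, is quasi-Gorenstein.

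The main obstacle is the sufficiency in (1): every relevant isomorphism is transparent at the level of $A$-modules, yet quasi-Gorensteinness is an intrinsic $R$-module statement, so the real content is upgrading $\omega_R\cong R$ from an $A$-linear to an $R$-linear isomorphism (equivalently, checking that the canonical module of $R$ is cyclic). In the necessity parts this difficulty is absorbed by Lemma \ref{cong}, which already delivers the required $R$-linear identification of $\Hom_R(A,R)$; there the only remaining care is the reduction to $\widehat A$ in the $(S_2)$ assertions of (2) and (3).
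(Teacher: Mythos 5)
Your proposal is, in substance, the paper's own proof for parts (1)--(3): the same use of Herzog--Kunz \cite[Satz 5.12]{HK} in both directions (producing $K_{A\bowtie^fJ}\cong\Hom_A(A\bowtie^fJ,J)$ for the finite extension, and $K_A\cong\Hom_{A\bowtie^fJ}(A,A\bowtie^fJ)$ for the quotient), the same appeal to Aoyama's isomorphism $\Hom_A(J,J)\cong A$ under $(S_2)$ for $\widehat A$, Lemma \ref{cong} for the necessity directions, quasi-Gorenstein $\Rightarrow(S_2)$ (= \cite[Lemma 2.1]{HSZ}) plus Corollary \ref{sur}(1) (valid since $J^2=0$ makes $J$ nil), and the completion $\widehat{A\bowtie^fJ}\cong\widehat A\bowtie^{\widehat f}\widehat J$ when $f$ is surjective; your re-derivation of Satz 5.12 via free presentations and Matlis duality is only a difference in bookkeeping. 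The one point worth stressing concerns (1): the step you isolate as ``the hard part'' --- upgrading the $A$-linear isomorphism $\Hom_A(A\bowtie^fJ,J)\cong A\oplus J\cong A\bowtie^fJ$ to an $(A\bowtie^fJ)$-linear one --- is precisely the step the paper does \emph{not} argue; its proof simply asserts ``Hence $K_{A\bowtie^fJ}\cong A\bowtie^fJ$ as $A\bowtie^fJ$-modules,'' and the naive coordinate identification is $R$-linear only when $J^2=0$. Your plan does close this gap. Writing $R:=A\bowtie^fJ$ and coordinatizing $\phi\in\Hom_A(R,J)$ by $(\phi(1_R),\phi\circ\iota_J)\in J\oplus\Hom_A(J,J)$, the form $\pi_J\colon(a,f(a)+j)\mapsto j$ satisfies $r\cdot\pi_J=(j,\mu_{f(a)+j})$ for $r=(a,f(a)+j)$, where $\mu_b$ is multiplication by $b$; since Aoyama's isomorphism writes any $\psi\in\Hom_A(J,J)$ as $\mu_{f(a_0)}$ and also $\mu_v=\mu_{f(a_v)}$ for $v\in J$, every pair $(v,\psi)$ equals $(a_0-a_v,\,f(a_0-a_v)+v)\cdot\pi_J$; so $r\mapsto r\pi_J$ is onto, and it is injective because composing with the $A$-isomorphism $\Hom_A(R,J)\cong R$ gives a surjective, hence injective, $A$-endomorphism of the finitely generated module $R$. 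Note that injectivity (vanishing of $\Ann_R$ of the canonical module) is genuinely needed: your parenthetical ``cyclicity forces $\omega_R\cong R$'' is false as a standalone implication --- a cyclic canonical module only makes $R/\Ann_R(\omega_R)$ quasi-Gorenstein.

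Part (4) is where you take a genuinely different route. The paper observes that $\iota_A$ is a flat local homomorphism with $\fm(A\bowtie^fJ)$ primary to $\fm^{\prime_f}$ (via \cite[Remark 3.3]{DFF1}) and invokes Aoyama's descent theorem \cite[Theorem 2.3]{A1}. You instead use that a finitely generated flat module over $A$ is free, so $\widehat R\cong\widehat A^{\,n+1}$, compare $\omega_{\widehat R}\cong\Hom_{\widehat A}(\widehat R,\omega_{\widehat A})\cong\omega_{\widehat A}^{\,n+1}$ with $\omega_{\widehat R}\cong\widehat R\cong\widehat A^{\,n+1}$, and conclude $\omega_{\widehat A}\cong\widehat A$ by Krull--Schmidt over the complete local ring $\widehat A$. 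Both arguments are correct; the paper's is a one-line citation, while yours is more self-contained (it needs only the existence of $\omega_{\widehat A}$, Satz 5.12, and Krull--Schmidt) and makes transparent exactly where flatness enters, at the cost of not generalizing to flat-but-not-free situations the way Aoyama's theorem does.
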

\begin{proof}
(1) Assume that $\widehat{A}$ satisfies $(S_2)$ and $J$ is a
canonical module of $A$. Using \cite[Satz 5.12]{HK}, this shows that
$A\bowtie^fJ$ has a canonical module
$K_{A\bowtie^fJ}\cong\Hom_A(A\bowtie^fJ,J)$. On the other hand,
applying the functor $\Hom_A(-,J)$ to the split exact sequence of
$A$-modules considered in Proposition \ref{P}(3), one obtains the
isomorphism $K_{A\bowtie^fJ}\cong \Hom_A(J,J)\oplus J$. Since
$\widehat{A}$ satisfies $(S_2)$, using \cite[Proposition 2]{A}, we
have $\Hom_A(J,J)\cong A$. Thus $K_{A\bowtie^fJ}\cong A\oplus J\cong
A\bowtie^fJ$ as $A$-modules. Note that
$K_{A\bowtie^fJ}\cong\Hom_A(A\bowtie^fJ,J)$ is an
$A\bowtie^fJ$-module by the usual way. Hence $K_{A\bowtie^fJ}\cong
A\bowtie^fJ$ as $A\bowtie^fJ$-modules. This means that $A\bowtie^fJ$
is quasi-Gorenstein.

(2) Assume that $A\bowtie^fJ$ is quasi-Gorenstein. Then by
\cite[Lemma 2.1]{HSZ}, we have $A\bowtie^fJ$ satisfies $(S_2)$.
Therefore by Corollary \ref{sur}, $A$ satisfies $(S_2)$. Since
$A\bowtie^{f}J$ is quasi-Gorenstein, thus a canonical module of
$A\bowtie^{f}J$ exists and it is isomorphic to $A\bowtie^{f}J$. By
\cite[Satz 5.12]{HK}, $A$ has a canonical module
$K_A\cong\Hom_{A\bowtie^fJ}(A,A\bowtie^fJ)$. On the other hand, by
Lemma \ref{cong}(2), we have $\Hom_{A\bowtie^fJ}(A,A\bowtie^fJ)\cong
J$. Therefore $J$ is a canonical module of $A$.

(3) First we show that $f^{-1}(J)$ is a canonical module of $A$. We
may use the same argument as employed in the proof of (2). One might
take into consideration that in the proof we appeal to Lemma
\ref{cong}(1) instead of Lemma \ref{cong}(2) in the context of the
proof of (2). Next, assuming the surjectivity of $f$, we show that
$\widehat{A}$ satisfies $(S_2)$. Since $A\bowtie^fJ$ is
quasi-Gorenstein, hence $\widehat{A\bowtie^fJ}$ is also
quasi-Gorenstein. Since $f$ is surjective, $B$ is a finitely
generated $A$-module, and therefore there is an
$\widehat{A}$-isomorphism
$\widehat{A\bowtie^fJ}\cong\widehat{A}\bowtie^{\widehat{f}}\widehat{J}$,
where $\widehat{f}:\widehat{A}\to\widehat{B}$ be the induced natural
surjective ring homomorphism. So that
$\widehat{A}\bowtie^{\widehat{f}}\widehat{J}$ is quasi-Gorenstein.
Then by \cite[Lemma 2.1]{HSZ}, we have
$\widehat{A}\bowtie^{\widehat{f}}\widehat{J}$ satisfies $(S_2)$.
Therefore by Corollary \ref{sur}, $\widehat{A}$ satisfies $(S_2)$.

(4) Assume that $A\bowtie^fJ$ is quasi-Gorenstein. Consider the
embedding $\iota_A:A\to A\bowtie^fJ$. Since $J$ is a flat
$A$-module, $\iota_A$ is a flat local homomorphism. Using
\cite[Remark 3.3]{DFF1}, the extension ideal $\fm(A\bowtie^fJ)$ is
$\fm^{\prime_f}=\fm\bowtie^fJ$-primary ideal. This in conjunction
with \cite[Theorem 2.3]{A1} implies that $A$ is quasi-Gorenstein.
\end{proof}

It is well known that $A$ is Gorenstein if and only if it is
Cohen-Macaulay and quasi-Gorenstein \cite[Page 88]{A1}. Also, it is
clear that $A$ is Cohen-Macaulay if and only if $\widehat{A}$ is
Cohen-Macaulay. Therefore, one can use Theorem \ref{cm} together
with Theorem \ref{gor} to derive the following corollaries.

\begin{cor}
Keep the assumptions of Theorem \ref{gor}. Assume that $A$ is
Cohen-Macaulay and $J$ is a canonical module of $A$. Then
$A\bowtie^fJ$ is Gorenstein.
\end{cor}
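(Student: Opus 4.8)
The plan is to show that $A\bowtie^fJ$ is at once Cohen-Macaulay and quasi-Gorenstein, and then to invoke the characterization recalled immediately above, namely that a local ring is Gorenstein precisely when it is both Cohen-Macaulay and quasi-Gorenstein \cite[Page 88]{A1}. Thus the corollary reduces to verifying these two properties separately, and both follow by feeding the hypotheses into results already established in the excerpt.

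For quasi-Gorensteinness I would apply Theorem \ref{gor}(1). That statement asks for two things: that $J$ be a canonical module of $A$, which is assumed, and that $\widehat{A}$ satisfy Serre's condition $(S_2)$. Since $A$ is Cohen-Macaulay, its completion $\widehat{A}$ is Cohen-Macaulay too, and every Cohen-Macaulay ring satisfies $(S_n)$ for all $n$; in particular $\widehat{A}$ satisfies $(S_2)$. Hence Theorem \ref{gor}(1) yields that $A\bowtie^fJ$ is quasi-Gorenstein.

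For Cohen-Macaulayness I would apply Corollary \ref{cm1}, which under the standing hypotheses (namely $J\subseteq\Jac(B)$ a finitely generated $A$-module) tells us that $A\bowtie^fJ$ is Cohen-Macaulay if and only if $A$ is Cohen-Macaulay and $J$ is a maximal Cohen-Macaulay $A$-module. The first condition is part of the hypotheses. The second follows from the classical fact that over a Cohen-Macaulay local ring a canonical module is maximal Cohen-Macaulay, its depth coinciding with $\dim A$ (see \cite{BH}). As $J$ is a finitely generated canonical module of the Cohen-Macaulay ring $A$, it is therefore maximal Cohen-Macaulay, and Corollary \ref{cm1} gives the Cohen-Macaulayness of $A\bowtie^fJ$.

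Putting the two conclusions together with the Gorenstein criterion finishes the argument. I do not anticipate any genuine obstacle here; the only step that leans on the literature rather than on the earlier results of the paper is the standard fact that the canonical module of a Cohen-Macaulay local ring is maximal Cohen-Macaulay.
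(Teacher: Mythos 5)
Your proof is correct and is essentially the paper's own argument: the paper derives this corollary by combining Theorem \ref{gor}(1) (quasi-Gorensteinness, via $\widehat{A}$ Cohen-Macaulay hence $(S_2)$) with Theorem \ref{cm} (Cohen-Macaulayness, via the canonical module being maximal Cohen-Macaulay) and the characterization Gorenstein $=$ Cohen-Macaulay $+$ quasi-Gorenstein. Your use of Corollary \ref{cm1} instead of Theorem \ref{cm} directly is an immaterial variant, since Corollary \ref{cm1} is exactly Theorem \ref{cm} specialized to the standing hypotheses of Theorem \ref{gor}.
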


\begin{cor}
Keep the assumptions of Theorem \ref{gor}. Assume that at least one
of the following conditions holds
\begin{enumerate}
  \item $f$ is an isomorphism and $\Ann_B(J)=0$; or
  \item $J^2=0$ and $\Ann_A(J)=0$.
\end{enumerate}
Then $A\bowtie^fJ$ is Gorenstein if and only if $A$ is
Cohen-Macaulay and $J$ is a canonical module of $A$.
\end{cor}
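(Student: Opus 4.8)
The plan is to reduce everything to results already in hand, exploiting the fact (recorded just before the statement) that $A\bowtie^fJ$ is Gorenstein precisely when it is both Cohen--Macaulay and quasi-Gorenstein. Accordingly I would split the biconditional into its two implications and treat them separately, keeping the standing hypotheses of Theorem \ref{gor} in force throughout (in particular $J\subseteq\Jac(B)$ and $J$ finitely generated over $A$), since these are exactly what is needed to feed into Corollary \ref{cm1}.

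For the sufficiency, assume $A$ is Cohen--Macaulay and $J$ is a canonical module of $A$. This is verbatim the hypothesis of the preceding corollary, so $A\bowtie^fJ$ is Gorenstein and nothing more is needed; if one prefers a direct argument, Cohen--Macaulayness of $A$ forces $\widehat A$ to satisfy $(S_2)$, whence Theorem \ref{gor}(1) gives quasi-Gorensteinness, while the canonical module $J$ of the Cohen--Macaulay ring $A$ is maximal Cohen--Macaulay, so Corollary \ref{cm1} gives Cohen--Macaulayness of $A\bowtie^fJ$.

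For the necessity, suppose $A\bowtie^fJ$ is Gorenstein, hence simultaneously Cohen--Macaulay and quasi-Gorenstein. The Cohen--Macaulayness of $A$ follows in both cases at once from Corollary \ref{cm1}. It remains to identify $J$ as a canonical module of $A$. Under condition (2) this is immediate from Theorem \ref{gor}(2), as $J^2=0$ and $\Ann_A(J)=0$ are exactly its hypotheses. Under condition (1), I would first observe that $f$ being an isomorphism gives $f(A)+J=B$, so the assumption $\Ann_B(J)=0$ is literally $\Ann_{f(A)+J}(J)=0$; Theorem \ref{gor}(3) then shows that $f^{-1}(J)$ is a canonical module of $A$.

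The one genuine point to verify is that this transfers back from $f^{-1}(J)$ to $J$: the isomorphism $f$ restricts to a bijection $f^{-1}(J)\to J$, and because the $A$-module structure on $J$ is the one induced by $f$ (so that $a\cdot j=f(a)j$), this restriction is an $A$-module isomorphism $f^{-1}(J)\cong J$. Consequently $J$ is itself a canonical module of $A$, which completes the necessity. The only mild obstacle anywhere is this bookkeeping of module structures — confirming the identification $f^{-1}(J)\cong J$ in case (1) and checking that the standing hypotheses $J\subseteq\Jac(B)$ and $J$ finitely generated legitimately license the invocations of Corollary \ref{cm1} and of Theorem \ref{gor}(2),(3).
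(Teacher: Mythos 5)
Your proof is correct and follows exactly the route the paper intends: the paper gives no separate proof of this corollary, noting only that it follows by combining Theorem \ref{cm} (here via Corollary \ref{cm1}) with Theorem \ref{gor} and the equivalence ``Gorenstein $=$ Cohen--Macaulay $+$ quasi-Gorenstein,'' which is precisely your decomposition into the two implications, using Theorem \ref{gor}(1) for sufficiency and Theorem \ref{gor}(2),(3) for necessity. The one detail the paper leaves implicit --- that in case (1) the isomorphism $f$ restricts to an $A$-module isomorphism $f^{-1}(J)\cong J$, so that ``$f^{-1}(J)$ is a canonical module'' transfers to $J$ --- is handled correctly in your write-up.
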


\begin{cor}
Keep the assumptions of Theorem \ref{gor} and assume that
$\Ann_{f(A)+J}(J)=0$. If $A\bowtie^fJ$ is Gorenstein, then $A$ is
Cohen-Macaulay and $f^{-1}(J)$ is a canonical ideal of $A$.
\end{cor}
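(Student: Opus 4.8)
The plan is to exploit the standard characterization, recalled above from \cite[Page 88]{A1}, that a Noetherian local ring is Gorenstein precisely when it is both Cohen-Macaulay and quasi-Gorenstein. Thus from the hypothesis that $A\bowtie^fJ$ is Gorenstein I would immediately extract two facts: $A\bowtie^fJ$ is Cohen-Macaulay, and $A\bowtie^fJ$ is quasi-Gorenstein. The two assertions of the corollary will then follow from these two facts separately, so the argument is essentially an assembly of Theorem \ref{cm} and Theorem \ref{gor}(3) rather than anything new.

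For the Cohen-Macaulay conclusion, I first need to ensure that the running hypothesis of Lemma \ref{dim} and Theorem \ref{cm}, namely $f^{-1}(\fq)\neq\fm$ for every $\fq\in\Spec(B)\setminus V(J)$, is actually in force here. Since the assumptions of Theorem \ref{gor} include that $J$ is a finitely generated $A$-module, I would invoke \cite[Remark 3.3]{DFF1} --- exactly as in the proof of Corollary \ref{cm1} --- to obtain this condition automatically. With the hypotheses of Theorem \ref{cm} thereby verified, part (1) of that theorem applies, and the Cohen-Macaulayness of $A\bowtie^fJ$ descends to $A$.

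For the canonical ideal conclusion, I would combine the quasi-Gorensteinness of $A\bowtie^fJ$ with the standing assumption $\Ann_{f(A)+J}(J)=0$ and feed these into Theorem \ref{gor}(3), which yields that $f^{-1}(J)$ is a canonical module of $A$. Since $f^{-1}(J)$ is by construction an ideal of $A$, a canonical module realized as an ideal is precisely a canonical ideal, so this completes the proof. I do not anticipate a genuine obstacle, as the statement is obtained purely by combining already-established results; the only points deserving care are bookkeeping ones, namely confirming that the hypotheses of Theorems \ref{cm} and \ref{gor} hold simultaneously under the present assumptions, and noting that the passage from \emph{canonical module} in Theorem \ref{gor}(3) to \emph{canonical ideal} in the statement is immediate because $f^{-1}(J)$ is literally an ideal of $A$.
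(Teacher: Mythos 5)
Your proposal is correct and follows exactly the route the paper intends: the paper derives this corollary (without spelling out details) from the decomposition of Gorenstein into Cohen--Macaulay plus quasi-Gorenstein, applying Theorem \ref{cm} (via Corollary \ref{cm1}'s hypothesis check through \cite[Remark 3.3]{DFF1}) for the first conclusion and Theorem \ref{gor}(3) for the second. Your additional bookkeeping --- verifying $f^{-1}(\fq)\neq\fm$ holds automatically since $J$ is a finitely generated $A$-module, and noting that a canonical module that is an ideal is a canonical ideal --- is exactly the implicit content of the paper's argument.
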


We recall that, if $\alpha:A\to C$, $\beta:B\to C$ are ring homomorphisms, the subring $D:=\alpha\times_C\beta:=\{(a,b)\in A\times B|\alpha(a)=\beta(b)\}$ of $A\times B$ is called the \emph{pullback} (or \emph{fiber product}) of $\alpha$ and $\beta$. We refer the readers attention to \cite{Fon}, \cite{Facc} and
in the Noetherian setting, to \cite{O}, \cite{O1} for the properties and importance of pullback constructions.
Let $f:A\to B$ be a ring homomorphism and $J$ be an ideal of $B$. If $\pi:B\to B/J$ is the
canonical projection and $\check{f}:=\pi\circ f$ , then $A\bowtie^fJ\cong\breve{f}\times_{B/J}\pi$, see \cite[Proposition 4.2]{DFF}.

 As an application of \cite[Theorem 4]{O1}, with some assumptions on the ideal $f^{-1}(J)$ and on the ring $f(A) + J$, the authors in \cite{DFF1} obtained the following characterization of when $A\bowtie^fJ$ is Gorenstein.

\begin{prop} (\cite[Proposition 5.7]{DFF1})
With the notation of Proposition \ref{P}, assume that $A$ is
Noetherian local, and $J\subseteq\Jac(B)$, which is finitely
generated $A$-module and, moreover, assume that $A$ is a Cohen-Macaulay ring,
$f(A)+ J$ is $(S_1)$ and equidimensional, $J\neq 0$ and that $f^{-1}(J)$ is a regular ideal
of $A$. Then, the following conditions are equivalent.
\begin{enumerate}
  \item $A\bowtie^fJ$ is Gorenstein.
  \item $f(A) + J$ is a Cohen-Macaulay ring, $J$ is a canonical module of $f(A) + J$ and $f^{-1}(J)$
is a canonical module of $A$.
\end{enumerate}
\end{prop}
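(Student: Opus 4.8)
The plan is to exhibit $A\bowtie^fJ$ as a fibre product and then apply Ogoma's Gorenstein criterion for pullbacks \cite[Theorem 4]{O1}; the bulk of the work is to translate the standing hypotheses into the hypotheses of that theorem. Write $\check{f}:=\pi\circ f\colon A\to B/J$, where $\pi\colon B\to B/J$ is the canonical surjection, so that $A\bowtie^fJ\cong\check{f}\times_{B/J}\pi$ by \cite[Proposition 4.2]{DFF}. Put $D:=A\bowtie^fJ$ and $B':=f(A)+J$, and set $C:=B'/J\cong A/f^{-1}(J)$; note $B'$ and $C$ are local, being quotients of the local ring $D$. The two coordinate projections furnish surjections $D\twoheadrightarrow A$, with kernel $0\times J$, and $D\twoheadrightarrow B'$, with kernel $f^{-1}(J)\times0$; modding $D$ out by both kernels yields $C$. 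Thus $D$ is the conductor (Milnor) square of the two surjections $\check{f}\colon A\twoheadrightarrow C$ and $B'\twoheadrightarrow C$, whose kernels are, respectively, the ideal $f^{-1}(J)$ of $A$ and the ideal $J$ of $B'$.

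This sets up the dictionary that makes the proposition transparent. The ideal $0\times J$ maps isomorphically onto the ideal $J$ of $B'$, and $f^{-1}(J)\times0$ maps isomorphically onto the ideal $f^{-1}(J)$ of $A$; so the two ideals named in condition (2) are exactly the kernels of the two structure maps of the square. Ogoma's theorem characterises the Gorenstein property of such a pullback in terms of the two factors: granting that it applies, $D$ is Gorenstein if and only if $A$ and $B'$ are Cohen-Macaulay and, moreover, the canonical module of $A$ is the kernel $f^{-1}(J)$ of $\check{f}$ while the canonical module of $B'$ is the kernel $J$ of $B'\to C$. Since $A$ is Cohen-Macaulay by hypothesis, this reads off precisely as the equivalence (1)$\Leftrightarrow$(2).

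It therefore remains to verify that the standing assumptions place us in the scope of \cite[Theorem 4]{O1}. First, $D$ is Noetherian and local: Noetherianity is Proposition \ref{P}(2) (as $J$ is a finitely generated $A$-module) and locality is Proposition \ref{P}(4)(iii) (as $A$ is local and $J\subseteq\Jac(B)$). Since $J$ is a finitely generated $A$-module, \cite[Remark 3.3]{DFF1} (as used in Corollary \ref{cm1}) gives $f^{-1}(\fq)\neq\fm$ for every $\fq\in\Spec(B)\setminus V(J)$, whence Lemma \ref{dim} yields $\dim D=\dim A=:d$. The remaining hypotheses then regulate the second factor and the corner: because $f^{-1}(J)$ is a regular ideal it contains a nonzerodivisor, so $\dim C=\dim A/f^{-1}(J)<d$, placing the corner strictly below the factors; and the assumptions that $J\neq0$ and that $B'=f(A)+J$ is $(S_1)$ and equidimensional force $B'$ to be an unmixed local ring of dimension $d$, matching $\dim A=d$. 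With $A$ and $B'$ thus seen to be unmixed local rings of dimension $d$ over a corner of smaller dimension, and the kernel ideals identified as above, \cite[Theorem 4]{O1} applies and delivers the equivalence.

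I expect the main obstacle to be exactly this hypothesis-matching, and within it the dimension bookkeeping for $B'=f(A)+J$. The delicate point is the equality $\dim\big(f(A)+J\big)=\dim A$: one must combine $J\neq0$ and the regularity of $f^{-1}(J)$ (which forces $\dim C<d$) with the equidimensionality of $B'$ (which prevents a low-dimensional minimal prime of $B'$ from dropping the dimension) and the $(S_1)$ condition (which rules out embedded primes), so that $B'$ emerges as an unmixed $d$-dimensional local factor of the square. Once this is in hand, the identification of the two kernel ideals with $f^{-1}(J)$ and $J$ makes the translation to, and from, \cite[Theorem 4]{O1} purely formal.
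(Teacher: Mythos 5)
Your overall route -- realizing $A\bowtie^fJ$ as the fibre product of the two surjections $A\twoheadrightarrow A/f^{-1}(J)$ and $f(A)+J\twoheadrightarrow \bigl(f(A)+J\bigr)/J\cong A/f^{-1}(J)$, with kernels $f^{-1}(J)\times 0$ and $0\times J$, and then invoking \cite[Theorem 4]{O1} -- is exactly the route of the original proof: the paper itself gives no argument for this proposition, but records just before the statement that \cite{DFF1} obtained it as an application of \cite[Theorem 4]{O1}. The genuine gap is in your hypothesis-matching, at precisely the point you flag as delicate: the equality $\dim\bigl(f(A)+J\bigr)=\dim A$ does \emph{not} follow from the standing assumptions. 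The conditions $(S_1)$ and equidimensionality are intrinsic to the ring $f(A)+J$ and impose no relation between its dimension and $\dim A$; in fact, since $J$ is a finitely generated $A$-module, $f(A)+J$ is integral over $f(A)$, so $\dim\bigl(f(A)+J\bigr)=\dim\bigl(A/\ker f\bigr)$, which can be strictly smaller than $\dim A$. Concretely, let $A=k[[x,y]]$, $B=k[[t]]$, let $f$ be the surjection with $f(x)=t$, $f(y)=0$, and put $J=tB$. Then every hypothesis of the proposition holds: $A$ is Cohen-Macaulay local, $J\subseteq\Jac(B)$ is nonzero and cyclic as an $A$-module, $f^{-1}(J)=(x,y)$ is a regular ideal, and $f(A)+J=k[[t]]$ is a discrete valuation ring, hence $(S_1)$ and equidimensional; yet $\dim\bigl(f(A)+J\bigr)=1<2=\dim A$. (This does not contradict the proposition: here both (1) and (2) fail; for instance $(x,y)\not\cong A\cong K_A$, and $A\bowtie^fJ$ has minimal primes $0\times J$ and $(x,y)\times 0$ of different coheights, so it is not even Cohen-Macaulay.) Hence your claim that ``$B'$ emerges as an unmixed $d$-dimensional local factor,'' and with it your verification that Ogoma's theorem in the form you quote it applies, is false.

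The repair is to extract the dimension equality from each side of the equivalence rather than from the hypotheses. If (1) holds, then $A\bowtie^fJ$ is Cohen-Macaulay, so $J$ is a maximal Cohen-Macaulay $A$-module by Corollary \ref{cm1}; since the $A$-action on $J$ factors through $f$, the ring $\bigl(f(A)+J\bigr)/\Ann_{f(A)+J}(J)$ is a finite extension of $A/\Ann_A(J)$, giving $\dim\bigl(f(A)+J\bigr)\geq\dim_{f(A)+J}J=\dim_AJ=\dim A\geq\dim\bigl(f(A)+J\bigr)$. If (2) holds, then $J$ and $f^{-1}(J)$ are proper ideals isomorphic to the canonical modules of the Cohen-Macaulay rings $f(A)+J$ and $A$, so by \cite[Proposition 3.3.18]{BH} the quotients $\bigl(f(A)+J\bigr)/J$ and $A/f^{-1}(J)$ are Gorenstein of dimensions $\dim\bigl(f(A)+J\bigr)-1$ and $\dim A-1$ respectively; as these two quotients are isomorphic, the dimensions agree. (Alternatively, quote Ogoma's theorem in a form in which equality of the dimensions of the two factors appears among the equivalent conditions, and note that it follows from (2) as just indicated.) With this correction your argument coincides with the proof given in \cite{DFF1}.
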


In concluding this section, as an application, we provide a method to
construct a new quasi-Gorenstein ring from a given one. Let $A$ be a
local ring and $X$ be an indeterminate over $A$. Set
$B:=A[X]/(X^2)$, $J:=XB$, and $f:A\to B$ be the natural embedding.
It is easy to see that $J$ is isomorphic to $A$ as an $A$-module.
Then, using Theorem \ref{gor} (1) and (4), one sees that $A$ is
quasi-Gorenstein if and only if $A\bowtie^fJ$ is quasi-Gorenstein.
Note that we have $A\bowtie^fJ\cong A[X]/(X^2)$.

\section{Universally catenary property under amalgamated construction}

Let us fix some notation which we shall use frequently throughout
this section: $A, B$ are two commutative rings with unity, $f:A\to
B$ is a ring homomorphism, and $J$ denotes an ideal of $B$.

In \cite[Example 3.11]{DF} and \cite[Example 2.6]{DFF1}, the authors
in some special examples illustrate the amalgamated algebras as a
quotient of some polynomial rings. In the following proposition, we
generally describe the amalgamated algebra $A\bowtie^f J$ as a
quotient of a polynomial ring.

\begin{prop}\label{s}
Let the notation just introduced and assume that $J$ as an
$A$-module is generated by $\{j_{\lambda}|\lambda\in\Lambda\}$. Then
$A\bowtie^f J$ is a homomorphic image of
$A[X_{\lambda}|\lambda\in\Lambda]$ the ring of polynomials over $A$
in indeterminates $X_{\lambda}$, for each $\lambda\in\Lambda$.
\end{prop}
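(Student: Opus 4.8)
The plan is to construct an explicit surjective ring homomorphism from the polynomial ring $R:=A[X_{\lambda}|\lambda\in\Lambda]$ onto $A\bowtie^f J$. First I would regard $A\bowtie^f J$ as an $A$-algebra through the natural embedding $\iota_A:A\to A\bowtie^f J$, $a\mapsto(a,f(a))$, of Proposition \ref{P}(1). By the universal property of the polynomial ring (it is the free commutative $A$-algebra on the set $\{X_{\lambda}\}$), there is a unique $A$-algebra homomorphism $\phi:R\to A\bowtie^f J$ determined by the assignment $\phi(X_{\lambda}):=\iota_J(j_{\lambda})=(0,j_{\lambda})$ for each $\lambda\in\Lambda$. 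This settles well-definedness and the homomorphism property for free; note in particular that products of the elements $(0,j_{\lambda})$ stay inside $A\bowtie^f J$ because $J$ is an ideal of $B$, so no compatibility needs to be checked by hand.

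Next I would verify that $\phi$ is onto. A typical element of $A\bowtie^f J$ has the form $(a,f(a)+j)$ with $a\in A$ and $j\in J$. Since the family $\{j_{\lambda}\}$ generates $J$ as an $A$-module --- where the $A$-action is the one induced by $f$, namely $a\cdot j=f(a)j$ --- I can write $j=\sum_{\lambda}f(a_{\lambda})j_{\lambda}$ as a finite sum with $a_{\lambda}\in A$. The key computation is
\begin{equation*}
\iota_A(a_{\lambda})\,\phi(X_{\lambda})=(a_{\lambda},f(a_{\lambda}))\,(0,j_{\lambda})=(0,f(a_{\lambda})j_{\lambda}),
\end{equation*}
using componentwise multiplication in $A\times B$. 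Summing over $\lambda$ and adding $\iota_A(a)=(a,f(a))$ then gives
\begin{equation*}
\phi\Bigl(a+\sum_{\lambda}a_{\lambda}X_{\lambda}\Bigr)=(a,f(a))+\Bigl(0,\sum_{\lambda}f(a_{\lambda})j_{\lambda}\Bigr)=(a,f(a)+j),
\end{equation*}
so $\phi$ hits every element of $A\bowtie^f J$ and is therefore surjective, which proves the claim.

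There is no serious obstacle here: the statement is essentially a bookkeeping exercise. The only point that requires a little care is keeping track of the two $A$-module structures in play --- the action on $J$ is $a\cdot j=f(a)j$, so the generating relation must be recorded as $j=\sum f(a_{\lambda})j_{\lambda}$ rather than $\sum a_{\lambda}j_{\lambda}$ --- together with the observation that the image of a monomial of positive degree always lands in $0\times J$, which holds because $J$ is an ideal. I would also expect the kernel of $\phi$ to admit an explicit description (encoding both the $A$-module relations among the $j_{\lambda}$ and the multiplicative relations arising from $j_{\lambda}j_{\mu}\in J$), and it is precisely this presentation of $A\bowtie^f J$ over $A$ that feeds into the universally catenary characterization developed in the remainder of the section.
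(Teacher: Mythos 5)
Your proposal is correct and is essentially the paper's own proof: the paper defines exactly the same homomorphism $\varphi:A[X_{\lambda}|\lambda\in\Lambda]\to A\bowtie^f J$ with $\varphi(a)=(a,f(a))$ and $\varphi(X_{\lambda})=(0,j_{\lambda})$, and asserts its surjectivity, which you verify in detail via the decomposition $(a,f(a)+j)=(a,f(a))+(0,\sum f(a_{\lambda})j_{\lambda})$. Your explicit bookkeeping of the $A$-module structure $a\cdot j=f(a)j$ is exactly the check the paper leaves to the reader.
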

\begin{proof}
Consider the ring homomorphism
$\varphi:A[X_{\lambda}|\lambda\in\Lambda]\to A\bowtie^f J$, defined
by $\varphi(a):=(a,f(a))$, for $a\in A$, and
$\varphi(X_{\lambda}):=(0,j_{\lambda})$, for $\lambda\in\Lambda$. It
is not difficult to see that $\varphi$ is surjective.
\end{proof}

\begin{cor}
Assume that $I$ as an ideal of $A$ is generated by
$\{i_{\lambda}|\lambda\in\Lambda\}$. Then the amalgamated
duplication $A\bowtie I:=\{(a,a+i)|a\in A, i\in I\}$ (\cite{D, DF})
is a homomorphic image of $A[X_{\lambda}|\lambda\in\Lambda]$ the
ring of polynomials over $A$ in indeterminates $X_{\lambda}$, for
each $\lambda\in\Lambda$.
\end{cor}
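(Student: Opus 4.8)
The plan is to derive this corollary as an immediate special case of Proposition~\ref{s}. The key observation is that the amalgamated duplication $A\bowtie I$ is precisely the amalgamated algebra $A\bowtie^f J$ in the case where one takes $B:=A$, the homomorphism $f:=\mathrm{id}_A$ to be the identity, and $J:=I$. Indeed, under these choices the defining set $\{(a,f(a)+j)\mid a\in A,\ j\in J\}$ becomes $\{(a,a+i)\mid a\in A,\ i\in I\}$, which is exactly the amalgamated duplication as recalled in the statement.

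First I would make this identification explicit, noting that $I$ is simultaneously an ideal of $B=A$ and a module over $A=B$, so all the hypotheses of Proposition~\ref{s} are met with no extra assumptions. Next I would invoke the hypothesis that $I$, as an ideal of $A$, is generated by $\{i_{\lambda}\mid\lambda\in\Lambda\}$; since $B=A$ and $f$ is the identity, these same elements generate $J=I$ as an $A$-module. Then I would apply Proposition~\ref{s} directly: it furnishes a surjection $\varphi:A[X_{\lambda}\mid\lambda\in\Lambda]\to A\bowtie^f J=A\bowtie I$ given by $\varphi(a)=(a,a)$ for $a\in A$ and $\varphi(X_{\lambda})=(0,i_{\lambda})$ for each $\lambda\in\Lambda$. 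This exhibits $A\bowtie I$ as a homomorphic image of the polynomial ring, completing the argument.

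There is essentially no obstacle here, as the corollary is a routine specialization. The only point deserving a moment of care is verifying that the generating set hypothesis transports correctly: one must confirm that a generating set for $I$ as an \emph{ideal} of $A$ indeed serves as a generating set for $J=I$ as the \emph{$A$-module} required by Proposition~\ref{s}. When $f=\mathrm{id}_A$ the $A$-module structure on $J$ induced by $f$ coincides with the ordinary $A$-module structure on $I$, so ideal generators and module generators agree, and the verification is trivial. Thus the entire proof reduces to recording the identification $B=A$, $f=\mathrm{id}_A$, $J=I$ and citing Proposition~\ref{s}.
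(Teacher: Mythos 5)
Your proposal is correct and matches the paper's (implicit) argument exactly: the corollary is the specialization of Proposition~\ref{s} to $B=A$, $f=\mathrm{id}_A$, $J=I$, under which the amalgamated algebra $A\bowtie^f J$ becomes the amalgamated duplication $A\bowtie I$ and ideal generators of $I$ are module generators of $J$. The paper records no separate proof precisely because this identification is immediate, so your write-up is the intended one.
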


As a consequence of the previous proposition, we provide a partial
characterization of the Noetherianity of $A\bowtie^f J$. It should
be noted that the authors in \cite[Proposition 5.7]{DFF} have
already given a characterization of the Noetherianity of $A\bowtie^f
J$. But, this is an obvious consequence of the above proposition
provided that $J$ is finitely generated as an $A$-module.

\begin{cor}
Assume that $J$ is finitely generated as an
$A$-module.
Then $A\bowtie^f J$ is Noetherian if and only if $A$ is Noetherian.
\end{cor}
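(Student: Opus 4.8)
The plan is to obtain both implications almost immediately from Proposition \ref{s}, relying only on the Hilbert Basis Theorem and the standard fact that a homomorphic image of a Noetherian ring is again Noetherian.

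For the direction ``$A$ Noetherian $\Rightarrow A\bowtie^f J$ Noetherian'', I would first exploit the finiteness hypothesis: since $J$ is a finitely generated $A$-module, its generating set may be taken finite, say $J=\sum_{i=1}^{n}Aj_i$, so that the index set $\Lambda$ appearing in Proposition \ref{s} is finite. That proposition then exhibits $A\bowtie^f J$ as a homomorphic image of the polynomial ring $A[X_1,\ldots,X_n]$ in finitely many indeterminates. If $A$ is Noetherian, the Hilbert Basis Theorem shows $A[X_1,\ldots,X_n]$ is Noetherian, and hence so is its quotient $A\bowtie^f J$. This recovers the content of \cite[Proposition 5.7]{DFF}, but here it drops out at once from the presentation of $A\bowtie^f J$ as a polynomial quotient.

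For the converse, the point that requires care is that one must \emph{not} simply invoke the embedding $\iota_A:A\hookrightarrow A\bowtie^f J$ of Proposition \ref{P}(1), since a subring of a Noetherian ring need not be Noetherian. Instead I would use the projection $P_A:A\bowtie^f J\to A$ from Proposition \ref{P}(3), which, being the restriction of the first-coordinate projection $A\times B\to A$, is a surjective ring homomorphism. Thus $A$ is realized as a homomorphic image (a quotient ring) of $A\bowtie^f J$, so that Noetherianity of $A\bowtie^f J$ passes to $A$. Note that this implication uses no hypothesis on $J$ whatsoever; the finite generation of $J$ enters only in the forward direction, where it is exactly what keeps the number of indeterminates finite so that the Hilbert Basis Theorem applies. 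There is thus no genuine obstacle here: the whole difficulty has already been absorbed into Proposition \ref{s}, and the only subtlety is selecting the correct (surjective) structure map for the reverse implication.
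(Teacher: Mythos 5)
Your proposal is correct and matches the paper's intended argument: the paper presents this corollary as an immediate consequence of Proposition \ref{s} (quotient of $A[X_1,\ldots,X_n]$ plus the Hilbert Basis Theorem for the forward direction), and your use of the surjection $P_A$, i.e.\ $A\cong (A\bowtie^f J)/(0\times J)$, is the standard observation supplying the converse. Your remark that the embedding $\iota_A$ would not suffice, and that the converse needs no hypothesis on $J$, is a correct and worthwhile clarification of what the paper leaves implicit.
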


The next corollary investigates the behaviour of universally
catenary property under amalgamated formation provided that $J$ is
finitely generated as an $A$-module. One says a locally finite
dimensional ring $A$ is \emph{catenary} if every saturated chain
joining prime ideals $\fp$ and $\fq$, $\fp\subseteq\fq$, has
(maximal) length height $\fq/\fp$; $A$ is \emph{universally
catenary} if all the polynomial rings $A[X_1,\ldots,X_n]$ are
catenary.

\begin{cor}
Assume that $J$ is finitely generated as an $A$-module. Then
$A\bowtie^f J$ is universally catenary if and only if $A$ is
universally catenary.
\end{cor}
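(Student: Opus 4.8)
The plan is to play off the two complementary ways in which $A$ and $A\bowtie^f J$ are related: by Proposition \ref{s} the ring $A\bowtie^f J$ is a finitely generated $A$-algebra, while by Proposition \ref{P}(3) the ring $A$ is a homomorphic image of $A\bowtie^f J$. Each implication of the corollary will then be a one-line consequence of a standard permanence property of universal catenarity, so essentially no computation is needed.

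First I would isolate the two closure properties I intend to invoke. (i) A homomorphic image of a universally catenary ring is universally catenary: if $C$ is universally catenary and $\mathfrak{c}\su C$ is an ideal, then $(C/\mathfrak{c})[X_1,\ldots,X_m]\cong C[X_1,\ldots,X_m]/\mathfrak{c}C[X_1,\ldots,X_m]$ is a quotient of the catenary ring $C[X_1,\ldots,X_m]$, hence catenary, for every $m$. (ii) A finitely generated algebra over a universally catenary ring is universally catenary: if $D=C[X_1,\ldots,X_n]/I$ and $C$ is universally catenary, then $D[Y_1,\ldots,Y_m]$ is a homomorphic image of $C[X_1,\ldots,X_n,Y_1,\ldots,Y_m]$, which is catenary, for every $m$. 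Both rest only on the facts that quotients of catenary rings are catenary and that forming polynomial rings commutes with passing to quotients; they are classical.

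Granting these, the forward direction is immediate: if $A$ is universally catenary then, writing $J=(j_1,\ldots,j_n)$ as an $A$-module and applying Proposition \ref{s}, the ring $A\bowtie^f J$ is a homomorphic image of $A[X_1,\ldots,X_n]$, hence a finitely generated $A$-algebra, and (ii) gives that it is universally catenary. For the converse, if $A\bowtie^f J$ is universally catenary, I would use that the projection $P_A\colon A\bowtie^f J\to A$ from Proposition \ref{P}(3) is a surjective \emph{ring} homomorphism with kernel $0\times J$, so that $A\cong (A\bowtie^f J)/(0\times J)$; then (i) shows $A$ is universally catenary. The Noetherian hypothesis tacit in ``universally catenary'' passes between $A$ and $A\bowtie^f J$ by Proposition \ref{P}(2).

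There is no real obstacle to overcome here; the only subtlety is conceptual. One must notice that, although the embedding $\iota_A$ realizes $A$ merely as a subring (and $A$-module direct summand) of the module-finite extension $A\bowtie^f J$---a situation in which catenarity notoriously need not descend---the map $P_A$ realizes $A$ as an honest quotient ring. It is this quotient structure that trivializes the descent and lets both implications go through via the elementary permanence properties above.
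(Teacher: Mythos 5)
Your proof is correct and follows exactly the route the paper intends: the forward direction via Proposition \ref{s} (writing $A\bowtie^f J$ as a homomorphic image of $A[X_1,\ldots,X_n]$, hence a finitely generated $A$-algebra) and the converse via the surjective ring homomorphism $P_A$ of Proposition \ref{P}(3), which realizes $A$ as the quotient $(A\bowtie^f J)/(0\times J)$, each combined with the standard permanence properties of universal catenarity. The paper states this corollary without proof precisely because it is this immediate consequence, and your remark that descent works through the quotient structure rather than through the integral extension $\iota_A$ is exactly the right point.
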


\begin{center} {\bf ACKNOWLEDGMENT}

\end{center}
The authors are deeply grateful to the referee for his/her careful reading of the paper and valuable
suggestions.

\end{document}